\newtheorem{theorem}{Theorem}[section]
\newtheorem{lemma}[theorem]{Lemma}
\newtheorem{corollary}[theorem]{Corollary}
\newtheorem{proposition}[theorem]{Proposition}
\theoremstyle{definition}
\newtheorem{example}[theorem]{Example}
\theoremstyle{remark}
\numberwithin{equation}{section}
\newcommand{\abs}[1]{\lvert#1\rvert}
\newcommand{\n}[1]{\Vert#1\Vert}
\newcommand{\C}{\mathbb C}
\newcommand{\D}{\mathbb D}
\newcommand{\R}{\mathbb R}
\newcommand{\U}{\mathbb U}
\begin{document}

\title{Semigroups of Composition Operators on Hardy Spaces of the half-plane}

\author{Athanasios G. Arvanitidis }
\address{Department of Mathematics, University of Thessaloniki,
         54124 Thessaloniki, Greece}
\email{arvanit@math.auth.gr}

\thanks{The author was supported with a graduate fellowship from the Alexander S. Onassis
Foundation.}

\subjclass[2010]{47D03, 47B33, 30H10}

\date{\today}


\keywords{Semigroups, Composition operators, Hardy spaces}

\begin{abstract}
We identify the semigroups consisting of bounded composition
operators on the Hardy spaces $H^p(\U)$ of the upper half-plane.
We show that any such semigroup is strongly continuous on
$H^p(\U)$ but not uniformly continuous and we identify the
infinitesimal generator.
\end{abstract}

\maketitle

\markboth{ATHANASIOS G. ARVANITIDIS}{SEMIGROUPS OF COMPOSITION
OPERATORS ON HARDY SPACES}

\section{Introduction}

Let $\U=\{z\in\mathbb{C}:\textrm{Im}z>0\}$ denote the upper half
of the complex plane. The Hardy space $H^{p}(\U)$, $0 < p<\infty,$
is the space of analytic functions $f:\U\rightarrow\mathbb{C}$ for
which
$$
\n{f}_{p}=\sup_{y>0}\Bigl(\int_{-\infty}^{\infty}
\abs{f(x+iy)}^{p}\,dx\Bigr)^{\frac{1}{p}}<\infty.
$$

For $1\leq p\leq\infty$, the spaces $H^{p}(\U)$ are Banach spaces
and $H^{2}(\U)$ is a Hilbert space. Furthermore for  $f\in
H^{p}(\U)$, $1\leq p<\infty$, the limit
$\lim_{y\rightarrow0}f(x+iy)$ exists for almost every $x \in
\mathbb{R}$ and we may define the boundary function on
$\mathbb{R}$, again denoted by $f$, as
$$
f(x)=\lim_{y\rightarrow0}f(x+iy).
$$
This function is p-integrable and
\begin{equation*}
\n{f}_{p}^{p}=\int_{-\infty}^{\infty}\abs{f(x)}^{p}\,dx.
\end{equation*}
For more details on Hardy spaces see \cite{Du}, \cite{Ga}.

Let $\mathcal{H}(\U)$ denote the space of all analytic functions
on $\U$ and $\phi:\U\to\U$ be  analytic. The composition operator
induced by $\phi$ is defined by
$$
C_{\phi}(f)=f\circ\phi, \quad f\in \mathcal{H}(\U).
$$
If $X$ is a linear subspace of $\mathcal{H}(\U)$ which is a Banach
space under a norm $\n{\,\,}_X$ we can consider the restriction of
$C_{\phi}$ on $X$. The question arises whether $C_{\phi}$ acts as
a bounded operator on $X$, that is if $f\circ\phi\in X$ for each
$f\in X$ and if that is the case if $\n{f\circ\phi}_X\leq
C\n{f}_X$ for a constant $C$. We will not consider this question
here, but  we mention that in contrast to the case of the Hardy
spaces of the unit disc, there are self-maps $\phi$ of $\U$ which
do not induce bounded composition operators on the Hardy spaces
$H^p(\U)$. More details for this will be presented in the next
section.

Suppose now that $\{\phi_{t}:t\geq0\}$ is a one-parameter
semigroup under composition of analytic self-maps of $\U$, that
is:

\vspace{0.2cm}\noindent
(1) $\phi_{0}(z)\equiv z$, \, \, the identity map of $\U$. \\
(2) $\phi_{t+s}=\phi_{t}\circ\phi_{s}$ \, \, for $t, s\geq0$. \\
(3) The map \, $(t,z)\to\phi_{t}(z)$ \, is jointly continuous on
\, $[0,+\infty)\times \U$.

\vspace{0.2cm}\noindent Then the induced maps
$$
T_t(f)=f\circ\phi_t
$$
form a semigroup of linear transformations on $\mathcal{H}(\U)$.
Semigroups of analytic functions on the half-plane and on the disc
was first studied by E. Berkson and H. Porta in \cite{BP}, where
they also prove the strong continuity of the induced semigroups of
composition operators on the Hardy spaces of the disc. Here we are
concerned with analogous questions on the Hardy spaces of the
half-plane.

Considering $\{T_t\}$ on a $H^p(\U)$ space we show that if one
$T_t, \ t>0$ is a bounded operator then all $T_t$ are bounded,
giving also examples of unbounded semigroups $\{T_t\}$. Assuming
further that $1\leq p<\infty$ and each $T_t$ is bounded on
$H^p(\U)$, we prove the strong continuity of $\{T_t\}$ on
$H^p(\U)$ and identify its infinitesimal generator.

A specific semigroup of composition operators was used in
\cite{AS} to study the Ces\`{a}ro operator and its adjoint on the
Hardy spaces of the half-plane.

\section{Preliminaries}

\subsection{Composition operators on $H^p(\U)$}
Recent results give characterizations of bounded composition
operators on $H^{p}(\U)$ spaces in terms of angular derivatives.
Next we recall the necessary definitions and tools for them.

Let $f:\U\to\C$ be an analytic function. If $f(z)\to c$, where
$c\in \C\cup\{\infty\}$, as $z=x+iy\to\infty$ through any sector
$$
T_u(\infty)=\{x+iy\in \U:|x|<uy\}, \quad u>0,
$$
we say that $c$ is the non-tangential limit of $f$ at $\infty$ and
we denote it by
$$
\angle \lim_{z\to \infty}f(z).
$$
Let $\phi:\U\to\U$ be an analytic function. The
Julia-Carath\'{e}odory theorem for the upper half-plane (see
\cite[Exercise 2.3.10]{CM}) says that
$$
\angle \lim_{z\to \infty}\frac{\phi(z)}{z} = \angle \lim_{z\to
\infty}\phi'(z)=\inf_{z\in\U}\frac{\textrm{Im}\phi(z)}{\textrm{Im}z}.
$$
From this it is clear that the limit
$$
\phi'(\infty):=\angle \lim_{z\to \infty}\frac{\phi(z)}{z},
$$
which we call it the angular derivative of $\phi$ at $\infty$,
always exists and it belongs to $[0, \ +\infty)$.

We consider also the conjugate function $\psi$ of $\phi$ on the
disc $\D=\{w\in\C:|w|<1\}$,
$$
\psi=\gamma^{-1}\circ\phi\circ\gamma:\D\to\D,
$$
where $\gamma(w)=i\frac{1+w}{1-w}$, a conformal map from $\D$ onto
$\U$, with inverse $\gamma^{-1}(z)=\frac{z-i}{z+i}$, $z\in\U$.
Similar we denote the non-tangential limit of $\psi$ at
$\zeta\in\partial\D$, if this exists, by
$$
\psi(\zeta):=\angle \lim_{w\to\zeta}\psi(w),
$$
that is the limit of $\psi(w)$ as $w\to \zeta$ through any sector
$$
S_a(\zeta)=\{w\in\D:|w-\zeta|<a(1-|w|)\}, \quad a>1.
$$
In particular if $\psi(1)=1$, we will use the angular derivative
of $\psi$ at 1,
$$
\psi'(1)=\angle \lim_{w\to 1}\frac{1-\psi(w)}{1-w},
$$
which is known (we see it also by Lemma \ref{equality_of_angular
derivatives}) that always exists (it may be $\infty$).

\begin{lemma}\label{non-tang conv}
Let $w\in\D$. Then $w\to 1$ non-tangentially if and only if
$\gamma(w)\to \infty$ non-tangentially.
\end{lemma}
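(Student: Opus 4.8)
The statement relates non-tangential approach to $1$ in $\D$ and non-tangential approach to $\infty$ in $\U$ under the Cayley transform $\gamma(w) = i\frac{1+w}{1-w}$. The plan is to work directly with the sector conditions defining non-tangential convergence and translate one into the other via explicit estimates on $\gamma$.

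First I would record the basic size and geometry facts about $\gamma$ near $w=1$. Writing $w = 1 - \zeta$ with $\zeta \to 0$ inside $\D$, one computes $\gamma(w) = i\frac{2-\zeta}{\zeta}$, so $|\gamma(w)| \asymp 1/|\zeta| = 1/|1-w|$, and more precisely $\gamma(w) \to \infty$. The key quantitative link is that $\frac{1-|w|}{|1-w|}$ controls ``how non-tangentially'' $w$ sits relative to $1$: for $w$ in a Stolz sector $S_a(1)$ this ratio is bounded below by $1/a$, and conversely a lower bound on this ratio places $w$ in some Stolz sector. On the $\U$ side, the analogous quantity is $\frac{\operatorname{Im}\gamma(w)}{|\gamma(w)|}$, which is bounded below exactly when $\gamma(w)$ stays in a sector $T_u(\infty)$. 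So the heart of the matter is the identity/estimate
$$
\frac{\operatorname{Im}\gamma(w)}{|\gamma(w)|} \asymp \frac{1-|w|^2}{|1-w|},
$$
which follows from the standard formula $\operatorname{Im}\gamma(w) = \frac{1-|w|^2}{|1-w|^2}$ together with $|\gamma(w)| \asymp \frac{1}{|1-w|}$ (indeed $|\gamma(w)| = \frac{|1+w|}{|1-w|}$ and $|1+w|$ is bounded between, say, $1$ and $2$ once $w$ is close to $1$).

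Then the proof splits into two implications. For the forward direction, assume $w \to 1$ through a sector $S_a(1)$; I would show $\gamma(w) \to \infty$ (clear from $|\gamma(w)| \to \infty$) and that $\gamma(w)$ lies eventually in some $T_u(\infty)$, using the displayed estimate to convert the lower bound $\frac{1-|w|}{|1-w|} \geq \frac{1}{a}$ into a lower bound on $\frac{\operatorname{Im}\gamma(w)}{|\gamma(w)|}$, hence an upper bound on $|\operatorname{Re}\gamma(w)| / \operatorname{Im}\gamma(w)$. For the converse, one runs the same computation backwards with $\gamma^{-1}(z) = \frac{z-i}{z+i}$ in place of $\gamma$, which is the Cayley map sending $\infty$ to $1$, and the symmetric estimate $\frac{1-|\gamma^{-1}(z)|^2}{|1-\gamma^{-1}(z)|} \asymp \frac{\operatorname{Im}z}{|z|}$ does the job. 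I expect no serious obstacle here; the only mildly delicate point is being careful that the implicit constants relating $a$ and $u$ are uniform, i.e. that a fixed Stolz sector maps into a fixed half-plane sector and vice versa, rather than something that degenerates — but this is transparent from the explicit formulas, so the argument is essentially a bookkeeping exercise with the Cayley transform.
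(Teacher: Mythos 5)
Your plan is correct and follows essentially the same route as the paper: both arguments translate the sector conditions directly through the explicit Cayley transform, with uniform control of the apertures (the paper compares $\frac{|w-1|}{1-|w|}=\frac{2}{|z+i|-|z-i|}$ with $\frac{|\mathrm{Re}\,z|}{\mathrm{Im}\,z}$, while you use the equivalent normalization $\frac{\mathrm{Im}\,\gamma(w)}{|\gamma(w)|}\asymp\frac{1-|w|}{|1-w|}$). The difference is only bookkeeping, and your flagged point about uniform constants is handled by the explicit formulas, just as in the paper.
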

\begin{proof}
It is clear that $w\to 1$ if and only if
$\gamma(w)=i\frac{1+w}{1-w}\to \infty$.  Let $a>1$, $w\in S_a(1)$
and $z=\gamma(w)=x+iy$. Since
\begin{align*}
\frac{|w-1|}{1-|w|}&=
\frac{|\gamma^{-1}(z)-1|}{1-|\gamma^{-1}(z)|}=
\frac{2}{|z+i|-|z-i|}\\
&=\frac{2(|z+i|+|z-i|)}{|z+i|^2-|z-i|^2}>\frac{4|x|}{(y+1)^2-(y-1)^2}=\frac{|x|}{y}
\end{align*}
we get that $z\in T_a(\infty)$. Conversely let $u>0$ and $z\in
T_u(\infty)$ with $y>1$. Then $\frac{|x|}{y}<u$ and it follows
that
$$
2(u+1)>\frac{2(|x|+y)}{y}=\frac{|x|+y+1+|x|+y-1}{y}>\frac{|z+i|+|z-i|}{|z+i|^2-|z-i|^2},
$$
thus by the above we get that $w\in S_{4(u+1)}(1)$ and the
conclusion follows.
\end{proof}

\begin{lemma}\label{equality_of_angular derivatives}
Let $\phi:\U\to\U$ be analytic and $\psi$ its conjugate on $\D$.
If $\psi(1)=1$, then
\begin{equation}
\psi'(1)=\frac{1}{\phi'(\infty)}.
\end{equation}
In particular, $\psi(1)=1$ and $\psi'(1)<\infty$ if and only if
$\phi'(\infty)>0$.
\end{lemma}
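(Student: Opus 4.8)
The plan is to write the difference quotient $\frac{1-\psi(w)}{1-w}$ explicitly in terms of $\phi$ and then pass to the limit. First I would substitute $z=\gamma(w)$, so that $w=\gamma^{-1}(z)=\frac{z-i}{z+i}$, whence $1-w=\frac{2i}{z+i}$. Since $\psi(w)=\gamma^{-1}(\phi(z))=\frac{\phi(z)-i}{\phi(z)+i}$ (and $\phi(z)+i\neq0$ because $\textrm{Im}\,\phi(z)>0$), we get likewise $1-\psi(w)=\frac{2i}{\phi(z)+i}$. Dividing, the factors $2i$ cancel and
\begin{equation*}
\frac{1-\psi(w)}{1-w}=\frac{z+i}{\phi(z)+i}=\frac{1+i/z}{\phi(z)/z+i/z}.
\end{equation*}

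Next I would let $w\to1$ non-tangentially. By Lemma \ref{non-tang conv} this is equivalent to $z=\gamma(w)\to\infty$ non-tangentially, and (as the proof of that lemma shows) $\gamma$ carries the sectors $S_a(1)$ and $T_u(\infty)$ into one another; hence the non-tangential limit of the left-hand side at $w=1$ equals the non-tangential limit of $\frac{z+i}{\phi(z)+i}$ at $z=\infty$. In the last fraction $i/z\to0$, while by the very definition of $\phi'(\infty)$ the denominator $\phi(z)/z+i/z$ tends to $\phi'(\infty)$. If $\phi'(\infty)>0$ this yields $\psi'(1)=1/\phi'(\infty)$; if $\phi'(\infty)=0$ then $\phi(z)/z+i/z\to0$ whereas $|z+i|\to\infty$, so $\bigl|\frac{1-\psi(w)}{1-w}\bigr|\to\infty$ and $\psi'(1)=\infty=1/\phi'(\infty)$ under the usual convention $1/0=\infty$. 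Either way the stated identity holds whenever $\psi(1)=1$.

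For the last assertion one implication is immediate from the identity: if $\psi(1)=1$ and $\psi'(1)<\infty$, then $\phi'(\infty)=1/\psi'(1)>0$. For the converse, assume $\phi'(\infty)>0$. Then for $z\to\infty$ non-tangentially we have $|\phi(z)|=|z|\,|\phi(z)/z|\to\infty$, and since $\gamma^{-1}(\zeta)=1-\frac{2i}{\zeta+i}\to1$ as $|\zeta|\to\infty$, it follows that $\psi(w)=\gamma^{-1}(\phi(\gamma(w)))\to1$ as $w\to1$ non-tangentially, i.e.\ $\psi(1)=1$; the identity already proved then gives $\psi'(1)=1/\phi'(\infty)<\infty$.

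The computation is routine; the two points that call for care are the passage between the two families of non-tangential approach regions — which is exactly why Lemma \ref{non-tang conv} was isolated beforehand — and the degenerate case $\phi'(\infty)=0$, where one must verify that the difference quotient genuinely blows up in modulus rather than merely failing to possess a finite limit.
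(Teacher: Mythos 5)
Your proof is correct and follows essentially the same route as the paper: express the difference quotient $\frac{1-\psi(w)}{1-w}$ through $\gamma$ (the paper multiplies by $\frac{1+w}{1+\psi(w)}$ to get $\frac{z}{\phi(z)}$, you compute the exact identity $\frac{z+i}{\phi(z)+i}$ and divide by $z$ --- a cosmetic difference), then invoke Lemma \ref{non-tang conv} and the Julia--Carath\'eodory definition of $\phi'(\infty)$, with the same argument for the converse implication via $\angle\lim_{z\to\infty}\phi(z)=\infty$. Your explicit treatment of the degenerate case $\phi'(\infty)=0$ is a welcome extra precision but not a change of method.
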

\begin{proof}
We have
\begin{align*}\label{ang_1}
\psi'(1)&=\angle\lim_{w\to
1}\frac{1-\psi(w)}{1-w}=\angle\lim_{w\to
1}\frac{(1-\psi(w))(1+w)}{(1+\psi(w))(1-w)}=\angle\lim_{w\to
1}\frac{\gamma(w)}{\gamma(\psi(w))}\nonumber\\
&=\angle\lim_{z\to
\infty}\frac{z}{\phi(z)}=\frac{1}{\phi'(\infty)}.
\end{align*}
If $\phi'(\infty)>0$, then by definition follows that $\angle
\lim_{z\to \infty}\phi(z)=\infty$, so
$$
\angle\lim_{w\to 1}\psi(w)=\angle\lim_{z\to
\infty}\gamma^{-1}(\phi(z))=1
$$
and furthermore $\psi'(1)<\infty$.
\end{proof}

Now let $0 < p<\infty$ and $\phi:\U\to\U$ be analytic. V. Matache
(\cite[Theorem 15]{Ma2}, \cite{Ma1}) showed that the induced
composition operator $C_{\phi}:H^{p}(\U)\to H^{p}(\U)$ is bounded
if and only if the conjugate function $\psi$ of $\phi$ has
$\psi(1)=1$ and $\psi'(1)<\infty$, i.e. if and only if
$\phi'(\infty)>0$. But the exact norm of $C_{\phi}$ was found by
S. Elliott and M. Jury in \cite{EJ} (see \cite[Corollary 3.5 and
Definition 2.4]{EJ}) and is
\begin{equation}\label{C estimate}
\n{C_{\phi}}=\phi'(\infty)^{-\frac{1}{p}}.
\end{equation}

Also an important role in the study of $C_{\phi}$ plays the
Denjoy-Wolff point of $\phi$. We recall that the Denjoy-Wolff
theorem for analytic self-maps of $\D$ \cite[Th. 2.51]{CM} through
the map $\gamma$ asserts that if $\phi$ is not the identity or an
elliptic automorphism, there is a point
$d\in\overline{\U}=\U\cup\R\cup\{\infty\}$ such that the sequence
of iterates $\phi_n\to d$ uniformly on compact subsets of $\U$. In
the case of elliptic automorphism $\phi$ has a fixed point
$d\in\U$. In both cases we call $d$ the DW point of $\phi$.

\begin{corollary}\label{contr-isom-DW}
Let $0<p<\infty$, $\phi:\U\to\U$ be analytic and $C_{\phi}$ the
induced composition operator on $H^{p}(\U)$. Then
$$
\n{C_{\phi}}\leq 1
$$
if and only if the DW point of $\phi$ is $\infty$.
\end{corollary}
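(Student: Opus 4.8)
The key identity to exploit is the norm formula \eqref{C estimate}, namely $\n{C_\phi} = \phi'(\infty)^{-1/p}$. From this, $\n{C_\phi} \le 1$ is equivalent to $\phi'(\infty) \ge 1$. So the whole statement reduces to the purely function-theoretic claim: \emph{the DW point of $\phi$ is $\infty$ if and only if $\phi'(\infty) \ge 1$.} I would prove this by transferring everything to the disc via the conjugate map $\psi = \gamma^{-1} \circ \phi \circ \gamma$, where $\gamma(w) = i\frac{1+w}{1-w}$, since $\gamma$ sends $1 \in \partial\D$ to $\infty$, so the DW point of $\phi$ is $\infty$ exactly when the DW point of $\psi$ is $1$.

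The disc side is classical Denjoy--Wolff/Julia--Carath\'eodory theory. First recall the standard dichotomy: $\psi$ has DW point on the boundary, say at $\zeta \in \partial\D$, if and only if $\psi(\zeta) = \zeta$ (as a nontangential limit) and $\psi'(\zeta) \le 1$; and if the DW point is interior, then at every boundary fixed point $\eta$ one has $\psi'(\eta) > 1$. I would combine this with Lemma \ref{equality_of_angular derivatives}: when $\psi(1) = 1$ we have $\psi'(1) = 1/\phi'(\infty)$. For the forward direction, if the DW point of $\phi$ is $\infty$ then $\psi$ has DW point $1$, hence $\psi(1) = 1$ and $\psi'(1) \le 1$, which by the lemma forces $\phi'(\infty) = 1/\psi'(1) \ge 1$ (the case $\psi'(1) = 0$ cannot occur, e.g.\ since $\psi'(1) \ge $ some positive quantity by the Julia inequality, or simply because $\psi'(1)=0$ would make $\psi$ constant-like at $1$ which is incompatible with $\psi$ being a nonconstant self-map fixing $1$; in any event $\phi'(\infty)$ is then even larger). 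For the reverse direction, suppose $\phi'(\infty) \ge 1$, i.e.\ $\phi'(\infty) > 0$; by Lemma \ref{equality_of_angular derivatives} this gives $\psi(1) = 1$ and $\psi'(1) = 1/\phi'(\infty) \le 1$, so $1$ is a boundary fixed point of $\psi$ with derivative $\le 1$, which by the Denjoy--Wolff characterization means $1$ is the DW point of $\psi$, hence $\infty$ is the DW point of $\phi$.

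I would be slightly careful about two degenerate cases. If $\phi$ is the identity, then $\phi'(\infty) = 1$ and, under the convention in the excerpt, we should check that ``$\infty$ is the DW point'' is consistent with $\n{C_\phi} = 1$ — here $C_\phi = I$, norm $1$, and no DW point issue arises because the statement is about the inequality being an equality. If $\phi$ is an elliptic automorphism (interior fixed point $d \in \U$, so DW point in $\U$, not $\infty$), then the conjugate $\psi$ is an elliptic disc automorphism and $\phi'(\infty) = 1/\psi'(1)$ with $\psi'(1) > 1$ at any boundary point, giving $\phi'(\infty) < 1$, consistent with $\n{C_\phi} > 1$. Also I should note $\n{C_\phi}$ being finite already presupposes $\phi'(\infty) > 0$ by Matache's theorem, so the equivalence is stated among maps inducing bounded $C_\phi$; for the purposes of this corollary one may either restrict to that case or observe that if $C_\phi$ is unbounded then $\n{C_\phi} = \infty > 1$ and simultaneously $\phi'(\infty) = 0$, forcing the DW point to lie in $\R \cup \U$ and not at $\infty$, so the biconditional still holds vacuously on both sides. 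The only real content, and the step most worth writing carefully, is the invocation of the boundary Denjoy--Wolff characterization together with Lemma \ref{equality_of_angular derivatives}; everything else is bookkeeping about $\gamma$.
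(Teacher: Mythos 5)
Your proposal is correct and follows essentially the same route as the paper: reduce via the norm formula \eqref{C estimate} to the claim that the DW point is $\infty$ iff $\phi'(\infty)\ge 1$, transfer to the disc, and invoke the boundary Denjoy--Wolff characterization ($\psi(1)=1$ and $\psi'(1)\le 1$) together with Lemma \ref{equality_of_angular derivatives}. The extra remarks about degenerate cases are fine but not needed beyond what the paper's one-paragraph argument already covers.
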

\begin{proof}
The DW point of $\phi$ is $\infty$ if and only if the DW point of
its conjugate $\psi$ is $1$, which is equivalent to the conditions
$\psi(1)=1$ and $\psi'(1)\leq1$ (see \cite[Grand Iteration
Theorem, p. 78]{Sh}). By this and Lemma \ref{equality_of_angular
derivatives} the DW point of $\phi$ is $\infty$ if and only if
$\phi'(\infty)\geq 1$, which from \eqref{C estimate} is equivalent
to $\n{C_{\phi}}\leq 1$.
\end{proof}

\subsection{Semigroups of analytic self-maps of $\U$.}

Let $\{\phi_{t}:t\geq0\}$ be a semigroup of analytic self-maps of
$\U$. From \cite{BP}, the analytic function $G:\U\to\C$ given by
$$
G(z)=\lim_{t\to0}\frac{\partial \phi_{t}(z)}{\partial t}
$$
is the infinitesimal generator of $\{\phi_{t}\}$, characterizes
$\{\phi_{t}\}$ uniquely and satisfies
$$
G(\phi_t(z))=\frac{\partial \phi_{t}(z)}{\partial t}, \quad z\in
\U, \quad t\geq0.
$$
Suppose $\{\phi_{t}\}$ is not trivial, where $\{\phi_{t}\}$ is
called trivial if $\phi_{t}(z)\equiv z$ for all $t$, then it turns
out by \cite[Theorem 2.6]{BP} that all $\phi_{t}$, $t>0$, have a
common DW point d. Moreover if $d\neq\infty$, then $G(z)$ has the
unique representation
\begin{equation}\label{representation of G}
G(z)=F(z)(z-\overline{d})(z-d),
\end{equation} where $F:\U\to\C$
is analytic, $F\not\equiv0$, with $\textrm{Im}F\geq0$ on $\U$,
while if $d=\infty$, then $\textrm{Im}G\geq0$ and $G\not\equiv0$
on $\U$. The trivial semigroup has generator $G\equiv0$.

Likewise let $\widetilde{G}$ be the infinitesimal generator of the
conjugate semigroup $\{\psi_t\}$ of $\{\phi_{t}\}$. Then
$\widetilde{G}(\psi_t(w))=\frac{\partial\psi_t(w)}{\partial t}$,
$w\in\D$, and for each $z\in\U$,
$$
\widetilde{G}(\psi_t(\gamma^{-1}(z)))=\frac{\partial\psi_t(\gamma^{-1}(z))}{\partial
t}=\frac{\partial\gamma^{-1}(\phi_{t}(z))}{\partial
t}=\frac{2i}{(\phi_{t}(z)+i)^2}\frac{\partial\phi_{t}(z)}{\partial
t}.
$$
Hence letting $t$ tends to 0 we get
\begin{equation}\label{G}
\widetilde{G}(\gamma^{-1}(z))=\frac{2i}{(z+i)^2}G(z).
\end{equation}

\begin{proposition}
Let $\{\phi_{t}:t\geq0\}$ be a semigroup of analytic self-maps of
$\U$ with DW point $d$. Then we can classify $\{\phi_{t}\}$ as
follows.\\
1) If $d\in\U$, then there is a unique univalent function
$h:\U\to\C$ with $h(d)=0$, $h'(d)=1$ such that
    \begin{equation}\label{h - DW interior}
    \phi_t(z)=h^{-1}(e^{G'(d)t} h(z)), \quad z\in\U, \quad t\geq0.
    \end{equation}
2) If $d\in\partial\U=\R\cup\{\infty\}$, then there is a unique
univalent function $h:\U\to\C$ with $h(i)=0$, $h'(i)=1$ such that
    \begin{equation}\label{h - DW boundary}
    \phi_t(z)=h^{-1}(h(z)+G(i)t), \quad z\in\U, \quad t\geq0.
    \end{equation}
\end{proposition}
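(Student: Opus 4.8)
The plan is to transfer everything to the disc via the conformal map $\gamma$, use the known linearization (Koenigs/Abel) theory for semigroups on $\D$, and then pull the result back to $\U$. First I would recall from the classical theory of semigroups of analytic self-maps of the disc (as in \cite{BP}, \cite{CM}, \cite{Sh}) that a nontrivial semigroup $\{\psi_t\}$ with DW point $\zeta_0$ admits a \emph{Koenigs function}: if $\zeta_0\in\D$ there is a univalent $\sigma:\D\to\C$ with $\sigma(\zeta_0)=0$, $\sigma'(\zeta_0)=1$ and $\sigma(\psi_t(w))=e^{\widetilde G'(\zeta_0)t}\sigma(w)$, while if $\zeta_0\in\partial\D$ there is a univalent $\sigma:\D\to\C$ with $\sigma(\psi_t(w))=\sigma(w)+ct$ for a suitable constant $c$ depending on the normalization. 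These are exactly the models that solve the Abel/Schr\"oder functional equation attached to the generator $\widetilde G$; existence and uniqueness (given the normalization) are standard. The two cases of the Proposition correspond, under $\gamma$, to $\zeta_0=\gamma^{-1}(d)\in\D$ when $d\in\U$, and to $\zeta_0\in\partial\D$ (with $\zeta_0=1$ precisely when $d=\infty$) when $d\in\partial\U$.

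Next I would set $h=\sigma\circ\gamma^{-1}$ composed with an affine normalization. In case (1), with $\zeta_0=\gamma^{-1}(d)$, I define $h(z)=\sigma(\gamma^{-1}(z))/\big(\sigma\circ\gamma^{-1}\big)'(d)$ so that $h(d)=0$ and $h'(d)=1$; univalence is inherited from that of $\sigma$ and $\gamma^{-1}$. Conjugating the disc model gives
\begin{equation*}
h(\phi_t(z))=h\big(\gamma(\psi_t(\gamma^{-1}(z)))\big)=e^{\widetilde G'(\zeta_0)t}h(z),
\end{equation*}
and it remains only to check that $\widetilde G'(\zeta_0)=G'(d)$. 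This is a direct computation: differentiating the conjugation relation $\gamma\circ\psi_t=\phi_t\circ\gamma$ in $t$ at $t=0$ yields $\gamma'(w)\widetilde G(w)=G(\gamma(w))$, and differentiating once more in $w$ at $w=\zeta_0$ (using $\widetilde G(\zeta_0)=0=G(d)$) gives $\widetilde G'(\zeta_0)=G'(d)$. Solving $h(\phi_t(z))=e^{G'(d)t}h(z)$ for $\phi_t$ gives \eqref{h - DW interior}, and uniqueness of $h$ follows from uniqueness of the Koenigs function under the stated normalization (any two such $h$ differ by a multiplicative constant fixed to $1$ by $h'(d)=1$).

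For case (2) the argument is parallel but uses the boundary Koenigs map. With $\zeta_0\in\partial\D$ I take the univalent $\sigma$ solving Abel's equation $\sigma(\psi_t(w))=\sigma(w)+c\,t$, set $h$ to be the affine normalization of $\sigma\circ\gamma^{-1}$ with $h(i)=0$ and $h'(i)=1$ (legitimate since $i=\gamma(0)$ is an interior point of $\U$ and $\sigma\circ\gamma^{-1}$ is univalent hence has nonvanishing derivative there), and then conjugation gives $h(\phi_t(z))=h(z)+c' t$ for some constant $c'$. Evaluating the $t$-derivative at $t=0$, $z=i$ gives $c'=h'(i)G(i)=G(i)$, using $\partial_t\phi_t(z)|_{t=0}=G(z)$ from the generator relation in the excerpt; this yields \eqref{h - DW boundary}, and uniqueness again follows from uniqueness of the boundary Koenigs function together with the two normalization conditions $h(i)=0$, $h'(i)=1$.

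The main obstacle is not the bookkeeping with $\gamma$ but the invocation of the existence/uniqueness of the Koenigs and Abel linearizing maps with the precise normalizations; strictly speaking one should either cite this cleanly for the disc (e.g. \cite{CM}, \cite{Sh}) and only do the transfer, or else construct $h$ directly on $\U$ as $h(z)=\lim_{t\to\infty}(\text{suitably rescaled } \phi_t)$ in the interior case and via $h(z)=\int^z \frac{d\zeta}{G(\zeta)}$ (the Abel integral) in the boundary case, verifying univalence. I would take the citation route: the content of the Proposition is essentially the half-plane restatement of the classical models, so the proof is chiefly the conjugation computation $\widetilde G'(\zeta_0)=G'(d)$ in case (1) and the identification of the translation constant as $G(i)$ in case (2), both of which are short.
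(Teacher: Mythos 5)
Your proposal is correct and follows essentially the same route as the paper: transfer to the conjugate semigroup $\{\psi_t\}$ on $\D$ via $\gamma$, invoke the known Koenigs/Abel linearization there, pull back and renormalize affinely, and identify the constants ($\widetilde G'(\gamma^{-1}(d))=G'(d)$ in the interior case, translation constant $=G(i)$ in the boundary case) from the generator conjugation relation. The only differences are cosmetic: the paper computes these constants through its explicit formula $\widetilde G(\gamma^{-1}(z))=\frac{2i}{(z+i)^2}G(z)$ and the value $\widetilde G(0)=\frac{1}{2i}G(i)$, whereas you differentiate the conjugation identity and the equation $h(\phi_t(z))=h(z)+c't$ directly.
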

We call $h$ in either \eqref{h - DW interior} or \eqref{h - DW
boundary} the associated univalent function of $\{\phi_{t}\}$.
\begin{proof}
The above derived by the corresponding results about the
associated univalent function $k$ of the conjugate semigroup
$\{\psi_t\}$ shown in \cite[p. 234]{Si} and the observation that
$b=\gamma^{-1}(d)$ is the corresponding DW point of $\{\psi_t\}$.

Namely if $d\in\U$, i.e. $b\in\D$, then
$k(\psi_t(w))=e^{\widetilde{G}'(b)t}k(w)$, $w\in\D$, with $k(b)=0$
and $k'(b)=1$, from which
$$
k(\gamma^{-1}(\phi_{t}(z)))=e^{\widetilde{G}'(b)t}k(\gamma^{-1}(z)),
\quad z\in\U.
$$
By \eqref{G} we get $\widetilde{G}'(b)=G'(d)-\frac{2}{d+i}G(d)$
and the representation \eqref{representation of G} says that
$G(d)=0$. Hence $\widetilde{G}'(b)=G'(d)$ and \eqref{h - DW
interior} follows by setting
$h=\frac{(d+i)^2}{2i}k\circ\gamma^{-1}$.

If $d\in\partial\U$, i.e. $b\in\partial\D$, then
$k(\psi_t(w))=k(w)+\widetilde{G}(0)t,$ $w\in\D$, with $k(0)=0$ and
$k'(0)=1$, from which
$$
k(\gamma^{-1}(\phi_{t}(z)))=k(\gamma^{-1}(z))+\widetilde{G}(0)t
\quad z\in\U.
$$
Since by \eqref{G}
$\widetilde{G}(0)=\frac{2i}{(\gamma(0)+i)^2}G(i)=\frac{1}{2i}G(i)$,
setting $h=2ik\circ\gamma^{-1}$ \eqref{h - DW boundary} follows.
\end{proof}

\subsection{Composition semigroups on $H^p(\U)$}

Let $0 < p<\infty$. Given a semigroup $\{\phi_{t}\}$, the induced
semigroup $\{T_t\}$ of composition operators on $H^p(\U)$ does not
need always to consists of bounded operators, as the following
examples shows. However, each $T_t$ is bounded with $\n{T_t}\leq1$
if and only if the DW point of $\{\phi_{t}\}$ is $\infty$
(Corollary \ref{contr-isom-DW}).

\begin{example}
Consider the family of analytic functions
$$
\phi_t(z)=i\frac{z+i+e^{-t}(z-i)}{z+i-e^{-t}(z-i)}, \quad z\in\U,
\quad t\geq0.
$$
For each t, $\phi_t(z)=\gamma( e^{-t}\gamma^{-1}(z))$, that is
$\psi_t(w)=e^{-t}w, \ w\in \D,$ is the conjugate function of
$\phi_t$. From this it is clear that each $\phi_t$ maps $\U$ into
$\U$ and that $\{\phi_t\}$ is a semigroup. Also we have that
$$
\angle \lim_{z\to \infty}\phi_t(z)=i\frac{1+e^{-t}}{1-e^{-t}},
\quad t\geq0.
$$
So for $t>0$ we get that $\angle \lim_{z\to
\infty}\phi_t(z)<\infty$, thus $\phi_{t}'(\infty)=0$, which
implies that each $T_t, \ t>0$, is an unbounded operator on
$H^p(\U)$, $0 < p<\infty$.
\end{example}

\begin{example}
Consider the family of analytic functions
$$
\phi_t(z)=(z+1)^{e^{-t}}-1, \quad z\in\U, \quad t\geq0.
$$
For each $z\in\U$,
\begin{eqnarray*}
\mathrm{Im}\phi_t(z)&=&\mathrm{Im}e^{e^{-t}(\log{|z+1|}+i Arg(z+1))}\\
&=&e^{e^{-t}\log{|z+1|}}\mathrm{Im}e^{ie^{-t}Arg(z+1)}\\
&=&|z+1|^{e^{-t}}\sin(e^{-t}Arg(z+1))>0,
\end{eqnarray*}
so each $\phi_t$ maps $\U$ into $\U$. Also $\phi_0(z)=z$ and for
$t, s\geq0$
$$
\phi_t(\phi_s(z))=[(z+1)^{e^{-s}}-1+1]^{e^{-t}}-1=\phi_{t+s}(z),
$$
thus $\{\phi_{t}\}$ is a semigroup. Now we have that
$$
\phi_t'(z)=e^{-t}(z+1)^{e^{-t}-1}, \quad t\geq0.
$$
Thus $\phi_{t}'(\infty)=0$ for $t>0$ and so each $T_t, \ t>0$, is
an unbounded operator on $H^p(\U)$, $0 < p<\infty$.
\end{example}

\begin{theorem}\label{C_t bound-deriv}
Let $\{\phi_{t}:t\geq0\}$ be a semigroup of analytic self-maps of
$\U$ and $0<p<\infty$. Then the following are equivalent:
\begin{enumerate}
    \item For each $t>0$ the composition operator $T_t:f\mapsto f\circ\phi_t$ is bounded on $H^p(\U)$.
    \item There exists $t>0$ such that $T_t$ is bounded on $H^p(\U)$.
    \item There exists $t>0$ such that the angular derivative $\phi'_t(\infty)>0$.
    \item For each $t>0$ the angular derivative $\phi'_t(\infty)>0$.
\end{enumerate}
Moreover if one of the above assertions holds, then
\begin{equation}\label{C t estimate}
\n{T_t}=\phi_1'(\infty)^{-\frac{t}{p}}.
\end{equation}
\end{theorem}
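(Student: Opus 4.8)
The plan is to prove the chain of implications $(2)\Rightarrow(3)\Rightarrow(4)\Rightarrow(1)\Rightarrow(2)$, exploiting the semigroup property to upgrade a single-time hypothesis to an all-time conclusion. The implication $(1)\Rightarrow(2)$ is trivial, and $(2)\Rightarrow(3)$ is immediate from Matache's characterization recalled in the preliminaries: if $T_t$ is bounded then $\psi_t(1)=1$ and $\psi_t'(1)<\infty$, which by Lemma \ref{equality_of_angular derivatives} means exactly $\phi_t'(\infty)>0$. Similarly $(3)\Rightarrow(1)$ together with the norm formula will follow once we establish the key multiplicative behavior of the angular derivative along the semigroup.

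The heart of the matter is the identity
\begin{equation}\label{mult_ang_deriv}
\phi_{t+s}'(\infty)=\phi_t'(\infty)\,\phi_s'(\infty),\quad t,s\geq0,
\end{equation}
which I would derive from the chain rule in the form supplied by the Julia--Carath\'{e}odory theorem. Writing $\phi_{t+s}=\phi_t\circ\phi_s$ and using $\phi_{t+s}'(\infty)=\angle\lim_{z\to\infty}\phi_{t+s}(z)/z = \angle\lim_{z\to\infty}\bigl(\phi_t(\phi_s(z))/\phi_s(z)\bigr)\bigl(\phi_s(z)/z\bigr)$, one needs that $\phi_s(z)\to\infty$ nontangentially as $z\to\infty$ nontangentially in order to pass to the limit in the first factor. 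This is the step I expect to be the main obstacle: when $\phi_s'(\infty)=0$ the map $\phi_s$ need not send $\infty$ to $\infty$ at all (as the two examples show), so the factorization argument only runs cleanly when we already know $\phi_s'(\infty)>0$, and one must handle the degenerate case separately. The cleanest route is to work on the disc instead: for the conjugate semigroup $\{\psi_t\}$, the quantity $\psi_t'(1)$ (when $\psi_t(1)=1$) is multiplicative in $t$ by the standard angular-derivative chain rule for self-maps of $\D$ fixing a boundary point, and one shows that if $\psi_{t_0}(1)=1$ with $\psi_{t_0}'(1)<\infty$ for some $t_0>0$ then the same holds for every $t>0$: for $t<t_0$ write $\psi_{t_0}=\psi_{t_0-t}\circ\psi_t$ and use that a self-map of $\D$ composed with $\psi_t$ can only have finite angular derivative at $1$ if $\psi_t$ does, while for $t>t_0$ iterate. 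This yields $(3)\Rightarrow(4)$, and feeding \eqref{mult_ang_deriv} back gives $(4)\Rightarrow(1)$ via \eqref{C estimate}.

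For the norm formula \eqref{C t estimate}, once \eqref{mult_ang_deriv} is known, the function $t\mapsto \phi_t'(\infty)$ is a multiplicative map of $[0,\infty)$ into $(0,\infty)$; by joint continuity of $(t,z)\mapsto\phi_t(z)$ (axiom (3) of a semigroup) together with the Julia--Carath\'{e}odory identification of $\phi_t'(\infty)$ as an infimum of the continuous-in-$t$ quantities $\mathrm{Im}\,\phi_t(z)/\mathrm{Im}\,z$, this multiplicative function is measurable, hence of the form $\phi_t'(\infty)=e^{ct}$ for some constant $c\in\R$; writing $e^c=\phi_1'(\infty)$ gives $\phi_t'(\infty)=\phi_1'(\infty)^{t}$. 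Combining with the Elliott--Jury formula $\n{C_\phi}=\phi'(\infty)^{-1/p}$ from \eqref{C estimate} applied to $\phi=\phi_t$ yields $\n{T_t}=\phi_t'(\infty)^{-1/p}=\phi_1'(\infty)^{-t/p}$, which is \eqref{C t estimate}. The one point requiring a little care here is justifying measurability (or directly continuity) of $t\mapsto\phi_t'(\infty)$ so as to invoke the Cauchy functional equation; semicontinuity from the infimum representation, plus the multiplicativity, suffices to rule out pathological solutions.
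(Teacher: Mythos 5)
Your overall skeleton (pass to the conjugate semigroup on $\D$, use Matache's characterization and the Elliott--Jury norm formula, and exploit multiplicativity of the angular derivative in $t$) is the same as the paper's, and your derivation of $\phi_t'(\infty)=\phi_1'(\infty)^t$ from multiplicativity plus measurability is a reasonable substitute for the paper's citation of \cite[Lemmas 1 and 3]{CMP2}. However, there is a genuine gap at the crucial step $(3)\Rightarrow(4)$. Your argument for $t<t_0$ is: since $\psi_{t_0}=\psi_{t_0-t}\circ\psi_t$ has a finite angular derivative at $1$, so does $\psi_t$. What the composition (Julia--Carath\'eodory $\liminf$) argument actually gives is only that $\liminf_{w\to1}\frac{1-|\psi_t(w)|}{1-|w|}<\infty$, hence that $\psi_t$ has a finite angular derivative at $1$ \emph{toward some boundary point} $\eta=\angle\lim_{w\to1}\psi_t(w)\in\partial\D$; it does not give $\eta=1$. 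But $\eta=1$ is exactly what is needed: if $\psi_t(1)\neq1$ then $\phi_t$ has a finite nontangential limit at $\infty$ and $\phi_t'(\infty)=0$ (this is precisely the mechanism in the paper's two examples of unbounded semigroups). Your phrase ``a self-map of $\D$ composed with $\psi_t$ can only have finite angular derivative at $1$ if $\psi_t$ does'' conflates ``finite angular derivative at $1$'' with ``fixes $1$ and has finite angular derivative there,'' and the purely iterative version of the stronger claim is false: $\chi(w)=-w$ satisfies $\chi\circ\chi=\mathrm{id}$, which fixes $1$ regularly, while $\chi$ fixes no boundary point. So any correct argument must use the continuity of the semigroup in $t$, which your sketch never invokes at this point; the same issue reappears in your ``for $t>t_0$ iterate'' step, since reducing $t$ modulo $t_0$ throws you back onto the unproved case.

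This missing ingredient --- that a boundary (regular) fixed point of one $\psi_{t_0}$ is a fixed point of every $\psi_t$ --- is a substantive theorem about one-parameter semigroups, and it is exactly what the paper imports from Contreras--D\'iaz-Madrigal--Pommerenke \cite[Theorems 1 and 5]{CMP1} and Cowen \cite{Co} (with \cite[Lemmas 1 and 3]{CMP2} then giving $\psi_t'(1)=\psi_1'(1)^t<\infty$ for all $t$). Once $\psi_t(1)=1$ for all $t$ is known, your chain-rule/multiplicativity argument and the passage to $\|T_t\|=\phi_1'(\infty)^{-t/p}$ via \eqref{C estimate} go through (the only other case, where $1$ is the Denjoy--Wolff point of $\psi_{t_0}$, is covered by the common DW point result of Berkson--Porta already used in the paper). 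As written, though, the proposal either needs to cite these results or supply a genuine proof of the common boundary fixed point property, and the latter cannot be done by the composition argument alone.
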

\begin{proof}
We saw in \eqref{C estimate} that $T_t$ is bounded on $H^p(\U)$ if
and only if $\phi_{t}'(\infty)>0$, in which case
$$
\n{T_t}=\phi'_t(\infty)^{-\frac{1}{p}}.
$$
Suppose now there exists
$\phi_s, \ s>0,$ such that $\phi_{s}'(\infty)>0$. Then by Lemma
\ref{equality_of_angular derivatives} we get that for the
conjugate function $\psi_s$ on $\D$
$$
\psi_s(1)=1 \text{ and } \psi'_s(1)<\infty.
$$
It is known (see \cite[Theorems 1 and 5]{CMP1}, \cite{Co}) that
all members of the semigroup $\{\psi_{t}\}$ have common boundary
fixed points, that is $\psi_{t}(1)=1$ for each $t$. Furthermore,
since $\psi'_s(1)<\infty$ for some s, \cite[Lemmas 1 and 3]{CMP2}
say that
$$
\psi_t'(1)=\psi_1'(1)^t<\infty \text{\ \ for each } t\geq0,
$$
which implies that
$$
\phi_{t}'(\infty)=\phi_1'(\infty)^t>0
$$
for each $t$ and the conclusion follows.
\end{proof}

Moreover the property that $\{T_t\}$ consists of bounded operators
depends on the behavior of the infinitesimal generator $G(z)$ of
$\{\phi_{t}\}$ as $z\to\infty$ non-tangentially.

\begin{theorem}\label{C_t bound-G}
Let $\{\phi_{t}:t\geq0\}$ be a semigroup of analytic self-maps of
$\U$ with infinitesimal generator $G$ and $0<p<\infty$. Then the
following are equivalent:
\begin{enumerate}
    \item Each composition operator $T_t:f\mapsto f\circ\phi_t$ is bounded on $H^p(\U)$.
    \item The non-tangential limit
    $$
    \delta:=\angle\lim_{z\to\infty}\frac{G(z)}{z}
    $$
    exists finitely.
    \item The non-tangential limit
    $$
    \angle\lim_{z\to\infty}G'(z)
    $$
    exists finitely.
\end{enumerate}
Moreover if one of the above assertions holds, then \\
i) $\delta=\angle\lim_{z\to\infty}G'(z)\in\R$
and \\
ii) $\n{T_t}=e^{-\frac{\delta t}{p}}$ for each $t\geq0$.
\end{theorem}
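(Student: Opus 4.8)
The plan is to establish the cycle in two essentially independent parts: the equivalence $(2)\Leftrightarrow(3)$ together with statement $(i)$ is pure function theory on a sector and makes no use of the semigroup structure, whereas $(1)\Leftrightarrow(2)$ and statement $(ii)$ are obtained by passing to the conjugate semigroup $\{\psi_t\}$ on $\D$ and reading off the boundary behaviour of $G$ from that of its generator $\widetilde G$ at the point $1$.

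\emph{The equivalence $(2)\Leftrightarrow(3)$ and $(i)$.} For $(2)\Rightarrow(3)$ put $g(z)=G(z)-\delta z$, so that $g(z)/z\to0$ non-tangentially. Given a sector $T_u(\infty)$, for $z\in T_u(\infty)$ of large modulus the disc $\{|\zeta-z|\le\tfrac12\operatorname{Im}z\}$ lies in $\U$, is contained in a fixed larger sector, and consists of points whose modulus is comparable to $|z|$; a Cauchy estimate then gives $|g'(z)|\le\frac{2}{\operatorname{Im}z}\sup_{|\zeta-z|\le\frac12\operatorname{Im}z}|g(\zeta)|\to0$, so $G'(z)=\delta+g'(z)\to\delta$ non-tangentially. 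For $(3)\Rightarrow(2)$, with $L:=\angle\lim_{z\to\infty}G'(z)$, integrate $G'$ from a fixed point $iy_0\in\U$ to $z=x+iy$ along the broken path $iy_0\to iy\to x+iy$, which stays in $T_u(\infty)$ when $z\in T_u(\infty)$; since $G'\to L$ along both legs, a Ces\`aro-type estimate yields $G(z)=Lz+o(|z|)$, hence $G(z)/z\to L$ non-tangentially. This proves $(2)\Leftrightarrow(3)$ with common value $\delta=L$, which is $(i)$.

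\emph{The equivalence $(1)\Leftrightarrow(2)$ and $(ii)$.} Using \eqref{G} and the substitution $w=\gamma^{-1}(z)$ (so that $z+i=\frac{2i}{1-w}$) one computes the identity
\begin{equation*}
\frac{G(z)}{z}=\frac{2}{1+w}\cdot\frac{\widetilde G(w)}{1-w},\qquad z\in\U .
\end{equation*}
By Lemma \ref{non-tang conv}, $z\to\infty$ non-tangentially precisely when $w\to1$ non-tangentially, and since $\tfrac{2}{1+w}\to1$, the number $\delta=\angle\lim_{z\to\infty}G(z)/z$ exists finitely if and only if $\angle\lim_{w\to1}\widetilde G(w)/(w-1)$ does, with $\delta=-\angle\lim_{w\to1}\widetilde G(w)/(w-1)$. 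Now, by Theorem \ref{C_t bound-deriv} and Lemma \ref{equality_of_angular derivatives}, assertion $(1)$ is equivalent to the statement that $1$ is a common regular boundary fixed point of $\{\psi_t\}$, i.e. $\psi_t(1)=1$ and $\psi_t'(1)<\infty$ for every $t$. On the other hand, the theory of one-parameter semigroups on $\D$ (see \cite{CMP2}, \cite{Si}, and the discussion in Section 2.2) tells us that $1$ is such a point if and only if $\widetilde G$ has a finite angular derivative at $1$, that is $\beta:=\angle\lim_{w\to1}\widetilde G(w)/(w-1)=\angle\lim_{w\to1}\widetilde G'(w)\in\R$, and that then $\psi_t'(1)=e^{\beta t}$ for all $t\ge0$; note that finiteness of $\angle\lim_{w\to1}\widetilde G(w)/(w-1)$ already forces $\angle\lim_{w\to1}\widetilde G(w)=0$, hence forces $1$ to be a fixed point of $\{\psi_t\}$, so the two conditions genuinely match up. Combining these gives $(1)\Leftrightarrow(2)$ and, moreover, $\delta=-\beta$. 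Finally, when $(1)$ holds, Theorem \ref{C_t bound-deriv} gives $\n{T_t}=\phi_1'(\infty)^{-t/p}$, while Lemma \ref{equality_of_angular derivatives} together with $\psi_t'(1)=e^{\beta t}$ gives $\phi_t'(\infty)=\psi_t'(1)^{-1}=e^{-\beta t}=e^{\delta t}$; hence $\phi_1'(\infty)=e^{\delta}$ and $\n{T_t}=e^{-\delta t/p}$, which is $(ii)$.

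The step I expect to be the main obstacle is the disc-side input in the third paragraph: one has to quote the precise statement relating a regular boundary fixed point $\sigma$ of a semigroup on $\D$ to the finiteness of the angular derivative of its generator at $\sigma$, together with the formula $\psi_t'(\sigma)=e^{\widetilde G'(\sigma)t}$, and verify that it is available in exactly the two-sided form needed --- in particular that finiteness of $\angle\lim_{w\to1}\widetilde G'(w)$ (equivalently of $\angle\lim_{w\to1}\widetilde G(w)/(w-1)$) is by itself enough to conclude $(1)$. Everything else reduces to bookkeeping with the conformal map $\gamma$ and Lemmas \ref{non-tang conv}--\ref{equality_of_angular derivatives}.
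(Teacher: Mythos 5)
Your proposal is correct and follows essentially the same route as the paper: the equivalence $(1)\Leftrightarrow(2)$ together with (ii) is obtained by passing to the conjugate semigroup through \eqref{G} and Lemma \ref{non-tang conv} and quoting \cite[Theorem 1]{CMP2}, while $(2)\Leftrightarrow(3)$ and (i) are proved by direct estimates in sectors. Your only deviations are cosmetic: a Cauchy derivative estimate on discs of radius $\tfrac12\operatorname{Im}z$ where the paper uses the Cauchy integral formula plus dominated convergence, and an L-shaped integration path where the paper integrates $G'$ along the straight segment from $i$ to $z$.
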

\begin{proof}
$(1\Leftrightarrow 2)$. This is based on a theorem of M. D.
Contreras, S. D\'{i}az-Madrigal and Ch. Pommerenke in \cite{CMP2}.
Each $T_t$ is bounded if and only if $\phi'_t(\infty)>0$, that is
if and only if $\psi_t(1)=1$ and $\psi'_t(1)<\infty$ for each t.
The last, by \cite[Theorem 1]{CMP2}, is equivalent with the
finitely existence of
$\angle\lim_{w\to1}\frac{\widetilde{G}(w)}{1-w}$, where
$\widetilde{G}$ is the generator of the conjugate semigroup
$\{\psi_t\}$. As we shown in relation \eqref{G},
$$
\widetilde{G}(\gamma^{-1}(z))=\frac{2i}{(z+i)^2}G(z).
$$
From this and Lemma \ref{non-tang conv}
\begin{equation*}
\angle\lim_{w\to1}\frac{\widetilde{G}(w)}{1-w}=\angle\lim_{w\to1}\frac{G(\gamma(w))}{\gamma(w)+i}=
\angle\lim_{z\to\infty}\frac{G(z)}{z+i}=\angle\lim_{z\to\infty}\frac{G(z)}{z}
\end{equation*}
and the equivalence follows. Moreover then \cite[Theorem 1]{CMP2}
implies that $\delta\in\R$ and that $\psi'_t(1)=e^{-\delta t}$,
that is $\n{T_t}=e^{-\frac{\delta t}{p}}$ for each $t\geq0$.

$(2\Leftrightarrow 3)$. Suppose now $\delta$ exists finitely. Then
we can write
$$
G(z)=\delta z+h(z), \quad z\in\U
$$
where
\begin{equation}\label{h*}
\angle\lim_{z\to\infty}\frac{h(z)}{z}=0.
\end{equation}
Fix $z\in\U$. If r is small enough that
$\{z+re^{i\theta}:0\leq\theta\leq2\pi\}$ lies in $\U$, then by the
Cauchy integral formula we have
\begin{align*}
G'(z)&=\frac{1}{2\pi}\int_0^{2\pi}\frac{G(z+re^{i\theta})}{re^{i\theta}}\,d\theta\\
&=\frac{1}{2\pi}\int_0^{2\pi}\frac{\delta(z+re^{i\theta})}{re^{i\theta}}+\frac{h(z+re^{i\theta})}{re^{i\theta}}\,d\theta\\
&=\delta +
\frac{1}{2\pi}\int_0^{2\pi}\frac{h(z+re^{i\theta})}{z+re^{i\theta}}\frac{z+re^{i\theta}}{re^{i\theta}}\,d\theta.
\end{align*}
We show that the last integral tends to zero as $z\to\infty$
non-tangentially, thus $\angle\lim_{z\to\infty}G'(z)=\delta$. Fix
a non-tangential sector $T_u(\infty), \ u>0$ and let a sequence
$z_n\to\infty$ through $T_u(\infty)$. For each n choose $r_n$ to
be the distance of $z_n$ to the boundary of $T_{2u}(\infty)$. It
follows from \eqref{h*} that there is $M>0$ such that
$$
\Big|\frac{h(z_n+r_n e^{i\theta})}{z_n+r_n e^{i\theta}}\Big|<M,
\quad \text{for all } n \text{ and } \theta.
$$
Furthermore, let $\omega$ be the smallest angle made by the
boundary lines
 of $T_u(\infty)$ and $T_{2u}(\infty)$, then
$\frac{r_n}{|z_n|}>\sin{\omega}>0$ for each $n$ and so
$$
\Big|\frac{z_n+r_n e^{i\theta}}{r_n
e^{i\theta}}\Big|<\frac{|z_n|}{r_n}+1<\frac{1}{\sin{\omega}}+1.
$$
Therefore an application of Lebesgue's dominated convergence
theorem and \eqref{h*} gives that $G'(z_n)\to\delta$. Since $u$
and $\{z_n\}$ are arbitrary our conclusion follows.

Conversely suppose $\angle\lim_{z\to\infty}G'(z)$ exists finitely.
Fix $u>0$ and let $z_n\to\infty$ through the sector $T_u(\infty)$.
We can suppose that $\mathrm{Im}z_n>1$ for each $n$ and we can
write
\begin{align*}
\frac{G(z_n)-G(i)}{z_n-i}&=\frac{1}{z_n-i}\int_0^1\frac{\partial}{\partial
t}\Big(G((z_n-i)t+i)\Big)\,dt\\
&=\int_0^1 G'((z_n-i)t+i)\,dt.
\end{align*}
Since $\angle\lim_{z\to\infty}G'(z)<\infty$, there is $M>0$ such
that $|G'(z_n)|<M$ for all $n$ and applying Lebesgue's dominated
convergence theorem we get
$$
\lim_{n\to\infty}\frac{G(z_n)-G(i)}{z_n-i}=\int_0^1
\lim_{n\to\infty}G'((z_n-i)t+i)\,dt=\angle\lim_{z\to\infty}G'(z).
$$
Furthermore
\begin{align*}
\lim_{n\to\infty}\frac{G(z_n)-G(i)}{z_n-i}&=\lim_{n\to\infty}\frac{G(z_n)-G(i)}{z_n-i}+\lim_{n\to\infty}\frac{G(i)}{z_n-i}\\
&=\lim_{n\to\infty}\frac{G(z_n)}{z_n-i}=\lim_{n\to\infty}\frac{G(z_n)}{z_n-i}\frac{z_n-i}{z_n}\\
&=\lim_{n\to\infty}\frac{G(z_n)}{z_n}.
\end{align*}
Since $u$ and $\{z_n\}$ are arbitrary we get that
$\angle\lim_{z\to\infty}\frac{G(z)}{z}=\angle\lim_{z\to\infty}G'(z)<\infty$,
completing the proof.
\end{proof}

\section{Strong continuity and infinitesimal generator}

Throughout this section $\{\phi_{t}\}$ is a semigroup which
induces a semigroup $\{T_t\}$ of bounded composition operators on
$H^{p}(\U)$ spaces.

\begin{lemma}\label{(z+i)^l}
Let $0<p<\infty$ and  $\lambda\in \mathbb{C}$, then
$h_{\lambda}(z)=(z+i)^{\lambda}\in H^{p}(\U)$ if and only if
$\textrm{Re}\lambda <-\frac{1}{p}$.
\end{lemma}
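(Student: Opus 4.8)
The plan is to reduce the question to an explicit integral computation. Write $\lambda = a + ib$ with $a = \textrm{Re}\,\lambda$. First I would fix $y>0$ and examine the horizontal integral
\[
I_p(y) = \int_{-\infty}^{\infty} \abs{(x+iy+i)^{\lambda}}^p\,dx = \int_{-\infty}^{\infty} \abs{x+i(y+1)}^{ap}\,e^{-bp\,\arg(x+i(y+1))}\,dx,
\]
using that $\abs{w^{\lambda}} = \abs{w}^{a}e^{-b\arg w}$ on the principal branch. Since for $x+i(y+1)\in\U$ the argument lies in $(0,\pi)$, the exponential factor is bounded above and below by positive constants depending only on $b$; hence $I_p(y)$ is finite for a given $y$ if and only if $\int_{-\infty}^{\infty}\abs{x+i(y+1)}^{ap}\,dx = \int_{-\infty}^{\infty}(x^2+(y+1)^2)^{ap/2}\,dx$ is finite, and the latter converges exactly when $ap < -1$, i.e. $a < -1/p$. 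This already shows that $\textrm{Re}\,\lambda < -1/p$ is necessary for $h_\lambda$ to lie in $H^p(\U)$ (the integral diverges for every $y$ otherwise), and that it is necessary and sufficient for each individual $I_p(y)$ to be finite.

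It remains to control the supremum over $y>0$. When $a<-1/p$, substitute $x = (y+1)s$ to get
\[
\int_{-\infty}^{\infty}(x^2+(y+1)^2)^{ap/2}\,dx = (y+1)^{ap+1}\int_{-\infty}^{\infty}(s^2+1)^{ap/2}\,ds,
\]
and since $ap+1 < 0$, the factor $(y+1)^{ap+1}$ is bounded by $1$ for $y>0$ and in fact decreases to $0$ as $y\to\infty$. Combining this with the two-sided bound on the argument factor, one obtains $\sup_{y>0} I_p(y) = I_p(0) < \infty$ (or at worst a finite constant times it), so $h_\lambda\in H^p(\U)$ with a clean norm bound. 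Conversely, if $a \geq -1/p$ then $I_p(y)=\infty$ for every $y>0$, so $h_\lambda\notin H^p(\U)$.

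The only mildly delicate point is the uniformity of the argument-factor estimate: one must check that $\arg(x+i(y+1))$ stays in $(0,\pi)$ uniformly, which is immediate, so $e^{-bp\arg(\cdot)}$ is squeezed between $e^{-\abs{b}p\pi}$ and $e^{\abs{b}p\pi}$ independently of $x$ and $y$. Given that, the argument is entirely elementary, and I do not anticipate a genuine obstacle — the imaginary part $b$ of $\lambda$ never affects membership, only the real part does, exactly as in the disc case. An alternative, slicker route is to note that $\gamma'(w) = \frac{2i}{(1-w)^2}$ transfers $H^p(\U)$ to $H^p(\D)$ via $f\mapsto (f\circ\gamma)\cdot(\gamma')^{1/p}$, under which $h_\lambda$ corresponds (up to a nonvanishing bounded factor) to $(1-w)^{-\lambda - 2/p}$, whose $H^p(\D)$-membership is governed by the classical criterion $\textrm{Re}(-\lambda-2/p) > -1/p$, i.e. $\textrm{Re}\,\lambda < -1/p$; but the direct computation above is self-contained and I would present that.
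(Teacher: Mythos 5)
Your proposal is correct and follows essentially the same route as the paper: bound the factor $e^{-\mathrm{Im}\lambda\,\arg(z+i)}$ above and below by constants (since $\arg(z+i)\in(0,\pi)$) to reduce to the real part of $\lambda$, then evaluate $\int_{-\infty}^{\infty}(x^2+(1+y)^2)^{p\,\mathrm{Re}\lambda/2}\,dx$ by scaling, which converges uniformly in $y$ exactly when $\mathrm{Re}\lambda<-1/p$. No substantive difference from the paper's argument.
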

\begin{proof}
Choosing a logarithmic branch we have
$\abs{(z+i)^{\lambda}}=e^{\textrm{Re}\lambda
\log|z+i|-\textrm{Im}\lambda\arg(z+i)}$ for each $z\in\U$ and we
can find constants $c, c'>0$ such that
$$
c'|z+i|^{\textrm{Re}\lambda}\leq\abs{(z+i)^{\lambda}}\leq
c|z+i|^{\textrm{Re}\lambda}.
$$
Thus we can suppose that $\lambda$ is real. Then
\begin{eqnarray*}
\n{h_{\lambda}}_{p}^{p}&=&\sup_{y>0}\int_{-\infty}^{+\infty}\Big(\frac{1}{x^{2}+(1+y)^2}\Big)^{-\frac{\lambda p}{2}}\,dx\\
&=&\sup_{y>0}(1+y)^{1+p\lambda}\int_{-\infty}^{+\infty}\Big(\frac{1}{x^{2}+1}\Big)^{-\frac{\lambda
p}{2}}\,dx.
\end{eqnarray*}
The last integral is convergent if and only if $-\frac{\lambda
p}{2}>\frac{1}{2}$, i.e. $\lambda <-\frac{1}{p}$, giving our
conclusion.
\end{proof}

The growth condition of $H^p(\U)$ functions in the following lemma
is known, see for instance \cite[p. 188]{Du}, \cite[p. 53]{Ga}.
Here we find the best possible constant.

\begin{lemma}
Let $0< p<\infty$ and suppose $f\in H^p(\U)$. Then for each $z\in
\U$,
\begin{equation}\label{growth estimate of H^p}
\abs{f(z)}^{p}\leq\frac{1}{4\pi}\frac{\n{f}_{p}^{p}}{\textrm{Im}z},
\end{equation}
where the constant $\frac{1}{4\pi}$ is the best possible.
\end{lemma}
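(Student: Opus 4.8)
The plan is to transfer the estimate to the unit disc, where the analogue $\abs{F(0)}^{p}\le\n{F}_{H^{p}(\D)}^{p}$ is classical and sharp, precisely because the Poisson kernel of $\D$ at the centre is the constant $\frac{1}{2\pi}$. It is worth recording first why the obvious argument on $\U$ cannot give the best constant: dominating $\abs{f}^{p}$ by the Poisson integral of its boundary values and estimating the kernel pointwise yields only $\abs{f(z)}^{p}\le\frac{1}{\pi}\n{f}_{p}^{p}/\mathrm{Im}\,z$, and since $\sup_{t}\frac{y}{(x-t)^{2}+y^{2}}=\frac{1}{y}$, no pointwise kernel bound can beat $\frac1\pi$. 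The factor $\frac14$ has to come from the conformal geometry.

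First I would reduce to the single point $z=i$. Given $z_{0}=x_{0}+iy_{0}\in\U$, put $g(z)=f(x_{0}+y_{0}z)$; then $g$ is analytic on $\U$, $g(i)=f(z_{0})$, and a linear change of variables in the defining integral shows $g\in H^{p}(\U)$ with $\n{g}_{p}^{p}=y_{0}^{-1}\n{f}_{p}^{p}$. Hence it suffices to prove $\abs{g(i)}^{p}\le\frac{1}{4\pi}\n{g}_{p}^{p}$.

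Next I would invoke the standard isometric isomorphism between $H^{p}(\U)$ and $H^{p}(\D)$ (see \cite{Du}): with $\gamma(w)=i\frac{1+w}{1-w}\colon\D\to\U$ and a holomorphic branch of $\gamma'(w)^{1/p}=\bigl(2i/(1-w)^{2}\bigr)^{1/p}$ on $\D$, the correspondence $h\mapsto F$, $F(w)=(2\pi)^{1/p}\gamma'(w)^{1/p}\,h(\gamma(w))$, carries $H^{p}(\U)$ isometrically onto $H^{p}(\D)$, the norm identity $\n{F}_{H^{p}(\D)}^{p}=\n{h}_{p}^{p}$ reducing to the boundary substitution $\abs{\gamma'(e^{i\theta})}\,d\theta=dx$. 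Evaluating at $w=0$, where $\gamma(0)=i$ and $\abs{\gamma'(0)}=\abs{2i}=2$, gives $\abs{F(0)}^{p}=2\pi\abs{\gamma'(0)}\abs{h(i)}^{p}=4\pi\abs{h(i)}^{p}$; while, since $\abs{F}^{p}$ is subharmonic on $\D$, the sub-mean-value inequality gives $\abs{F(0)}^{p}\le\frac{1}{2\pi}\int_{0}^{2\pi}\abs{F(re^{i\theta})}^{p}\,d\theta\le\n{F}_{H^{p}(\D)}^{p}$ for every $0<r<1$. Combining these two facts, $\abs{h(i)}^{p}\le\frac{1}{4\pi}\n{h}_{p}^{p}$, and applying this to $h=g$ together with the reduction above proves \eqref{growth estimate of H^p}. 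For the sharpness of $\frac{1}{4\pi}$, fix $z_{0}=x_{0}+iy_{0}$ and take $f(z)=(y_{0}/\pi)^{1/p}(z-\overline{z_{0}})^{-2/p}$: since $z-\overline{z_{0}}=y_{0}\bigl(\frac{z-x_{0}}{y_{0}}+i\bigr)$, this $f$ is a constant multiple of $h_{-2/p}$ from Lemma~\ref{(z+i)^l} precomposed with an affine automorphism of $\U$, hence lies in $H^{p}(\U)$, and a direct computation gives $\n{f}_{p}^{p}=\frac{y_{0}}{\pi}\int_{-\infty}^{\infty}\frac{dx}{(x-x_{0})^{2}+y_{0}^{2}}=1$ while $\abs{f(z_{0})}^{p}=\frac{y_{0}}{\pi}\abs{z_{0}-\overline{z_{0}}}^{-2}=\frac{1}{4\pi y_{0}}$, so equality holds in \eqref{growth estimate of H^p}.

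The delicate point is the transfer step: one must use the \emph{weighted} map $h\mapsto(2\pi)^{1/p}\gamma'(w)^{1/p}h(\gamma(w))$ — not a bare power of $1-w$ — so that the correspondence is genuinely isometric, and then carry the numerical factor $\abs{\gamma'(0)}=2$ through, which together with the $\frac{1}{2\pi}$ in the normalisation of the disc norm produces exactly the constant $\frac{1}{4\pi}$. The remaining ingredients — the linear rescaling, the boundary change of variables, and the subharmonicity of $\abs{F}^{p}$ — are routine or classical.
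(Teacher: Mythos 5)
Your proof is correct, but it takes a genuinely different route from the paper's. The paper first settles $p=2$ intrinsically on the half-plane: it identifies the reproducing kernel $k_z(w)=\frac{i}{2\pi(w-\overline{z})}$ of $H^2(\U)$, so that $\abs{f(z)}=\abs{\langle f,k_z\rangle}\le\n{f}_2\n{k_z}_2$ with $\n{k_z}_2^2=k_z(z)=\frac{1}{4\pi\,\mathrm{Im}z}$, and then reduces general $p$ to $p=2$ via the canonical factorization $f=bg$ ($b$ a Blaschke product, $g$ nonvanishing with the same boundary modulus), applying the Hilbert-space bound to a branch of $g^{p/2}$; sharpness is checked at $z=i$ with $h_{-2/p}(z)=(z+i)^{-2/p}$. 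You instead normalize to the point $i$ by an affine rescaling, transfer to the disc through the isometric weighted composition $F(w)=(2\pi)^{1/p}\gamma'(w)^{1/p}h(\gamma(w))$, and apply the sub-mean-value property of $\abs{F}^p$ at the origin, with the constant $4\pi$ emerging from $\abs{\gamma'(0)}=2$ together with the $\frac{1}{2\pi}$ normalization of the disc norm; your extremal function is just a translate/dilate of the paper's $h_{-2/p}$, and your norm and value computations for it are correct. What your route buys: it handles all $0<p<\infty$ uniformly and avoids the factorization theorem entirely, at the cost of invoking the exact isometric form of the disc--half-plane correspondence (correctly cited to Duren, and with the constant tracked correctly, which is indeed the delicate point). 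The paper's route stays inside the half-plane, needs only Cauchy--Schwarz plus factorization, and at $z=i$ is essentially equivalent to yours, since the reproducing kernel $k_i$ is (up to constants) the same object whose $2/p$-th power furnishes both extremal functions. Your opening observation that a pointwise Poisson-kernel bound cannot do better than $\frac{1}{\pi}$ is accurate and explains why some conformal or Hilbert-space input is needed, though it is not required for the proof itself.
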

\begin{proof}
We examine first the case $p=2$. We consider a point $z\in \U$ and
we recall that the $H^2(\U)$ function
$$
k_{z}(w)=\frac{i}{2\pi(w-\overline{z})}, \quad  w \in \U,
$$
is the reproducing kernel of $H^2(\U)$, that is for each $f\in
H^2(\U)$
$$
f(z)=\langle f, k_z\rangle,
$$
where the pairing is the inner product of $H^2(\U)$. From this
$$
\n{k_z}_2^2=\langle k_z, k_z\rangle=k_z(z)=\frac{1}{4\pi
\textrm{Im}z}.
$$
Let $f \in H^2(\U)$. Then Cauchy-Schwarz inequality implies that
\begin{equation*}
|f(z)|^2=|\langle f,k_{z} \rangle|^2 \leq
\n{f}_{2}^2\n{k_{z}}_{2}^2=\frac{\n{f}_{2}^2}{4\pi \textrm{Im}z}.
\end{equation*}
For $p\neq 2$, let $f \in H^p(\U)$ ($f\not\equiv 0$). By the
factorization of functions in $H^p(\U)$ (see \cite[Theorem
11.3]{Du})
$$
f(z)=b(z)g(z), \qquad z\in \U,
$$
where $g$ is a nonvanishing $H^p(\U)$ function such that
$$
|f(x)|=|g(x)| \ \text{ almost everywhere on $\R$ }
$$
and $b$ is a Blaschke product for the upper half plane of the form
$$
b(z)=\Big(\frac{z-i}{z+i}\Big)^m \prod_n
\frac{|z_n^2+1|}{z_n^2+1}\frac{z-z_n}{z-\overline{z_n}},
$$
where $m$ is a nonnegative integer and $z_n$ are the zeros
$(z_n\neq i)$ of $f$ in $\U$. Since
$\Big|\frac{z-a}{z-\overline{a}}\Big|<1$ for each $z, a\in\U$, it
follows that $|b(z)|< 1$, thus
$$
|f(z)|\leq |g(z)| \ \text{ for each $z\in\U$. }
$$
Moreover, since $g$ is a nonvanishing $H^p(\U)$ function, we can
choose a single-valued branch of $g^{p/2}$, which belongs to
$H^2(\U)$ and by the case $p=2$ follows that
\begin{align*}
|f(z)|^p&\leq |g^{p/2}(z)|^2\leq\frac{\n{g^{p/2}}_2^2}{4\pi\textrm{Im}z}\\
&=\frac{1}{4\pi\textrm{Im}z}\int_{-\infty}^{\infty}|g(x)|^p\,dx\\
&=\frac{1}{4\pi\textrm{Im}z}\int_{-\infty}^{\infty}|f(x)|^p\,dx\\
&=\frac{1}{4\pi}\frac{\n{f}_p^p}{\textrm{Im}z}.
\end{align*}

Finally, for each $0< p<\infty$ considering the estimate for the
$H^p(\U)$ function $h_{-2/p}(z)=(z+i)^{-\frac{2}{p}}$, for which
$\n{h_{-2/p}}_p^p=\pi$, we see that at $z=i$ equality holds, which
implies that $\frac{1}{4\pi}$ is the best possible constant for
this inequality.
\end{proof}

\begin{theorem}\label{Strong continuity and infinitesimal - Hp}
Suppose $1\leq p<\infty$ and let $\{\phi_t\}$ be a semigroup of
analytic self-maps of $\U$ which induces a semigroup $\{T_t\}$ of
bounded composition operators on $H^{p}(\U)$. Then
\begin{enumerate}
    \item $\{T_t\}$ is strongly continuous on $H^{p}(\U)$.
    \item If $G$ is the generator of $\{\phi_t\}$, then the infinitesimal
generator $\Gamma$ of $\{T_t\}$ has domain of definition
$$
D(\Gamma)=\{f\in H^{p}(\U): Gf'\in H^{p}(\U)\}
$$
and is given by
$$
\Gamma(f)=Gf', \quad  f\in D(\Gamma).
$$
\end{enumerate}
\end{theorem}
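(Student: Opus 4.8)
The plan is to deduce (1) by combining the boundedness already in hand with pointwise convergence and the geometry of $H^{p}$, and then to read off (2) from (1) by a routine differentiation argument; the real work is in (1).

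For (1): by Theorem \ref{C_t bound-G} we have $\|T_t\|=e^{-\delta t/p}$, so $M:=\sup_{0\le t\le1}\|T_t\|<\infty$, and from condition (3) on $\{\phi_t\}$ with $\phi_0=\mathrm{id}$ we have $\phi_t(z)\to z$ for every $z\in\U$ as $t\to0^+$, hence $T_tf=f\circ\phi_t\to f$ pointwise on $\U$ for every $f\in H^{p}(\U)$. I would next show $\|T_tf\|_p\to\|f\|_p$: the inequality $\|T_tf\|_p\le e^{-\delta t/p}\|f\|_p$ gives $\limsup_{t\to0}\|T_tf\|_p\le\|f\|_p$, while for each fixed $y>0$ Fatou's lemma applied to $x\mapsto|f(\phi_t(x+iy))|^{p}$ gives $\liminf_{t\to0}\int_{\R}|f(\phi_t(x+iy))|^{p}\,dx\ge\int_{\R}|f(x+iy)|^{p}\,dx$, and taking the supremum over $y$ yields $\liminf_{t\to0}\|T_tf\|_p^{p}\ge\|f\|_p^{p}$. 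When $1<p<\infty$, $H^{p}(\U)$ is a closed subspace of $L^{p}(\R)$, hence uniformly convex and reflexive; since point evaluations are continuous on $H^{p}(\U)$ (by \eqref{growth estimate of H^p}) and $(T_tf)(z)=f(\phi_t(z))\to f(z)$, every weak limit of a subsequence of $\bigl(T_{t_n}f\bigr)$ agrees with $f$ on $\U$, so $T_tf\rightharpoonup f$; uniform convexity then upgrades $T_tf\rightharpoonup f$ together with $\|T_tf\|_p\to\|f\|_p$ to $\|T_tf-f\|_p\to0$. For $p=1$ — where convexity is not available — I would factor $f=g_1g_2$ with $g_1,g_2\in H^{2}(\U)$ and $\|g_1\|_2^{2}=\|g_2\|_2^{2}=\|f\|_1$; since $\phi_t'(\infty)>0$ the $T_t$ are bounded on $H^{2}(\U)$ as well (Theorem \ref{C_t bound-deriv}), $T_tf=(T_tg_1)(T_tg_2)$, and by Cauchy--Schwarz
\[
\|T_tf-f\|_1\le\|T_tg_1-g_1\|_2\,\|T_tg_2\|_2+\|g_1\|_2\,\|T_tg_2-g_2\|_2\longrightarrow0
\]
by the $p=2$ case and $\sup_{0\le t\le1}\|T_t\|_{H^{2}}<\infty$. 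Finally, strong continuity at $0$ together with the semigroup law and local boundedness gives strong continuity on $[0,\infty)$.

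For (2): now $\{T_t\}$ is a $C_0$-semigroup with generator $\Gamma$. If $f\in D(\Gamma)$ and $\Gamma f=g$, then $t^{-1}(T_tf-f)\to g$ in $H^{p}(\U)$, hence — by \eqref{growth estimate of H^p} — uniformly on compact subsets of $\U$; evaluating at $z$ and using the chain rule with $\partial_t\phi_t(z)|_{t=0}=G(z)$ gives $g(z)=\lim_{t\to0}t^{-1}\bigl(f(\phi_t(z))-f(z)\bigr)=f'(z)G(z)$, so $g=Gf'$ and in particular $Gf'\in H^{p}(\U)$. Conversely, let $f\in H^{p}(\U)$ with $Gf'\in H^{p}(\U)$. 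Since $\partial_s\phi_s(z)=G(\phi_s(z))$, the chain rule gives $\partial_s f(\phi_s(z))=(Gf')(\phi_s(z))=[T_s(Gf')](z)$ pointwise on $\U$, and integrating in $s$,
\[
(T_tf)(z)-f(z)=\int_0^t[T_s(Gf')](z)\,ds,\qquad z\in\U.
\]
The map $s\mapsto T_s(Gf')$ is continuous into $H^{p}(\U)$ (by strong continuity and the semigroup law), so the right-hand side is the value at $z$ of the $H^{p}(\U)$-valued Bochner integral $\int_0^t T_s(Gf')\,ds$; as point evaluations are continuous, this forces $T_tf-f=\int_0^t T_s(Gf')\,ds$ in $H^{p}(\U)$. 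Dividing by $t$ and letting $t\to0^+$, strong continuity gives $t^{-1}(T_tf-f)\to T_0(Gf')=Gf'$ in $H^{p}(\U)$; hence $f\in D(\Gamma)$ with $\Gamma f=Gf'$, which identifies $D(\Gamma)$ and $\Gamma$.

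The main obstacle is entirely inside (1): the passage from pointwise convergence to norm convergence, effected by $\|T_tf\|_p\to\|f\|_p$ plus uniform convexity when $1<p<\infty$, and by the $H^{1}=H^{2}\cdot H^{2}$ factorization in the non-reflexive case $p=1$. Once strong continuity is established, part (2) is the routine computation above, its only mild subtlety being the promotion of the scalar identity for $T_tf-f$ to an identity in $H^{p}(\U)$.
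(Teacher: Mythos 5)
Your proposal is correct, and part (2) is essentially the paper's own argument (the pointwise differentiation for the inclusion $D(\Gamma)\subseteq\{f:Gf'\in H^p\}$, and the identity $T_tf-f=\int_0^tT_s(Gf')\,ds$ together with $\frac1t\int_0^tT_s(Gf')\,ds\to Gf'$, which the paper gets by citing Pazy). Part (1), however, follows a genuinely different route. The paper first reduces, by density of the functions in $H^p(\U)$ continuous up to $\R$ and local boundedness of $\n{T_t}$, to such a function $g$; it then extracts a.e.\ boundary convergence $\phi_{t_{n_k}}(x)\to x$ by proving $\n{h\circ\phi_{t_n}-h}_2\to0$ for the single function $h(z)=\pi^{-1/2}(z+i)^{-1}$ via the parallelogram law and the sharp growth estimate \eqref{growth estimate of H^p}, and finally applies Fatou's lemma to $2^p(|g\circ\phi_{t_{n_k}}|^p+|g|^p)-|g\circ\phi_{t_{n_k}}-g|^p$ to force $\n{g\circ\phi_{t_{n_k}}-g}_p\to0$. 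You instead prove $\n{T_tf}_p\to\n{f}_p$ directly (upper bound from the operator norms, lower bound from Fatou on horizontal lines), combine this with weak convergence $T_tf\rightharpoonup f$ (bounded family, continuous point evaluations, reflexivity) and the Radon--Riesz property of the uniformly convex space $H^p(\U)\subset L^p(\R)$ for $1<p<\infty$, and handle $p=1$ by the factorization $f=g_1g_2$ with $g_1,g_2\in H^2(\U)$ and Cauchy--Schwarz. Both are sound; the paper's argument is more elementary (measure theory plus one Hilbert-space identity) and treats all $p$ uniformly without invoking reflexivity or uniform convexity, while yours avoids the density theorem and the a.e.\ boundary-convergence machinery at the price of the convexity argument and the $H^1=H^2\cdot H^2$ trick — note that the paper's parallelogram-law step is really a Hilbert-space instance of your ``norms converge plus weak-type convergence implies strong convergence'' principle applied to one well-chosen function.
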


\begin{proof}
(1) For the strong continuity we need to show
$$
\lim_{t\to 0}\n{T_t(f)-f}_{p}=0,
$$
for every  $f\in H^{p}(\U)$. Fix a function $f\in H^{p}(\U)$.
Since the set of $H^p(\U)$ functions that are continuous on
$\U\cup\R$, denoted by $\mathcal{A}^p(\U)$, is dense in $H^p(\U)$
(see \cite[Corollary 3.3]{Ga}), for arbitrary $\epsilon>0$ we can
find $g\in \mathcal{A}^p(\U)$ such that $\n{f-g}_p<\epsilon$. Then
\begin{eqnarray*}
\n{T_t(f)-f}_p&\leq&
\n{T_t(f)-T_t(g)}_p+\n{T_t(g)-g}_p+\n{g-f}_p\\
&\leq&(\n{T_t}+1)\n{f-g}_p+\n{T_t(g)-g}_p
\end{eqnarray*}
and further by Theorem \ref{C_t bound-deriv} follows that
$$
\n{T_t(f)-f}_p\leq(\phi_1'(\infty)^{-\frac{t}{p}}+1)\epsilon+\n{T_t(g)-g}_p.
$$
Therefore, since $\phi_1'(\infty)^{-\frac{t}{p}}$ as a function of
$t$ is uniformly bounded on bounded intervals of $[0, \ +\infty)$,
we see that it suffices to show that for each $g\in
\mathcal{A}^p(\U)$
$$
\n{T_t(g)-g}_p=\n{g\circ\phi_{t}-g}_p\to 0, \quad \text{ as } t\to
0.
$$
By way of contradiction, suppose there is a function $g\in
\mathcal{A}^p(\U)$ and a sequence $\{t_n\}$ of values of $t$ such
that $t_n\to 0$ as $n\to \infty$ and
$$
\n{g\circ\phi_{t_n}-g}_p^p=\int_{-\infty}^{\infty}|g(\phi_{t_n}(x))-g(x)|^p\,dx\geq
s>0, \quad \text{ for each } n.
$$
We will show first that there is a subsequence $\{t_{n_k}\}$ such
that
\begin{equation}\label{p-w subsq}
g(\phi_{t_{n_k}}(x))\to g(x) \text{ almost everywhere on } \R.
\end{equation}
To do this we consider the $H^2(\U)$ function
$h(z)=\pi^{-\frac{1}{2}}(z+i)^{-1}$ for which
$$
\n{h}_2^2=\frac{1}{\pi}\int_{-\infty}^{\infty}\frac{1}{x^2+1}\,dx=1
$$
and we will show that $\n{h\circ\phi_{t_n}-h}_2\to 0$. If this is
true, then $h\circ\phi_{t_n}\to h$ in measure on $\R$ and thus
there is a subsequence $\{t_{n_k}\}$ such that
$h(\phi_{t_{n_k}}(x))\to h(x)$ almost everywhere on $\R$ (see for
instance \cite[Ex. 9, p. 85]{WZ}), from which
$$
\phi_{t_{n_k}}(x)\to x, \quad \text{ a. e. on } \R.
$$
Hence, since $g$ is continuous on $\U\cup\R$, \eqref{p-w subsq}
follows. Now to show that $\n{h\circ\phi_{t_n}-h}_2\to 0$ we use
the parallelogram law which asserts that
$$
\n{h\circ\phi_{t_n}-h}_2^2+\n{h\circ\phi_{t_n}+h}_2^2=2(\n{h\circ\phi_{t_n}}_2^2+\n{h}_2^2)
$$
for each $n$. From this and the norm of each $T_t$ in Theorem
\ref{C_t bound-deriv} follows
\begin{align*}
\n{h\circ\phi_{t_n}-h}_2^2&\leq2(\phi'_1(\infty)^{-t_n}+1)\n{h}_2^2-\n{h\circ\phi_{t_n}+h}_2^2\\
&=2(\phi'_1(\infty)^{-t_n}+1)-\n{h\circ\phi_{t_n}+h}_2^2.
\end{align*}
Further from the growth estimate \eqref{growth estimate of H^p}
$$
\n{h\circ\phi_{t_n}+h}^2_2\geq |h(\phi_{t_n}(i))+h(i)|^2 4\pi,
$$
thus
$$
\n{h\circ\phi_{t_n}-h}_2^2\leq
2(\phi'_1(\infty)^{-t_n}+1)-|h(\phi_{t_n}(i))+h(i)|^2 4\pi.
$$
Since $2(\phi'_1(\infty)^{-t_n}+1)\to 4$ and
$|h(\phi_{t_n}(i))+h(i)|^2 4\pi\to 4|h(i)|^2 4\pi=4$ as $n\to
\infty$, we get that
$$
\n{h\circ\phi_{t_n}-h}_2\to 0.
$$
Next we consider the sequence of functions
$$
S_{t_{n_k}}(x)=2^p\big(|g(\phi_{t_{n_k}}(x))|^p+|g(x)|^p\big)-|g(\phi_{t_{n_k}}(x))-g(x)|^p
$$
defined for almost all $x\in\R$. A standard inequality,
$(a+b)^p\leq 2^p(a^p+b^p)$, $a,b\geq0$, shows that the above
functions are nonnegative. Since as found above
$$
g(\phi_{t_{n_k}}(x))\to g(x) \text{ almost everywhere on } \R,
$$
an application of Fatou's lemma to the sequence $\{S_{t_{n_k}}\}$
gives
\begin{align*}
2^{p+1}\n{g}_p^p&\leq\liminf_{k\to
\infty}\int_{-\infty}^{\infty}2^p\big(|g(\phi_{t_{n_k}}(x))|^p+|g(x)|^p\big)-|g(\phi_{t_{n_k}}(x))-g(x)|^p\,dx\\
&=\liminf_{k\to
\infty}\Big[2^p\Big(\int_{-\infty}^{\infty}|g(\phi_{t_{n_k}}(x))|^p\,dx+\int_{-\infty}^{\infty}|g(x)|^p\,dx\Big)\\
&\qquad\qquad\qquad\qquad\qquad\qquad\qquad\quad-\int_{-\infty}^{\infty}|g(\phi_{t_{n_k}}(x))-g(x)|^p\,dx\Big]\\
&\leq \liminf_{k\to
\infty}\Big[2^p(\phi'_1(\infty)^{-t_{n_k}}+1)\n{g}_p^p-\int_{-\infty}^{\infty}|g(\phi_{t_{n_k}}(x))-g(x)|^p\,dx\Big]\\
&=2^{p+1}\n{g}_p^p-\limsup_{k\to
\infty}\int_{-\infty}^{\infty}|g(\phi_{t_{n_k}}(x))-g(x)|^p\,dx.
\end{align*}
Thus
$$
0\geq \limsup_{k\to
\infty}\int_{-\infty}^{\infty}|g(\phi_{t_{n_k}}(x))-g(x)|^p\,dx\geq
\liminf_{k\to
\infty}\int_{-\infty}^{\infty}|g(\phi_{t_{n_k}}(x))-g(x)|^p\,dx\geq0
$$
and so we conclude that
$$
\n{g\circ\phi_{t_{n_k}}-g}_p\to 0,
$$
which contradicts the original choice of $g$ and $\{t_n\}$.

(2) By definition the domain $D(\Gamma)$ of $\Gamma$ consists of
all $f\in H^{p}(\U)$ for which the limit $\lim_{t \to
0}\frac{T_t(f)-f}{t}$ exists in $H^{p}(\U)$ and
$$
\Gamma(f)=\lim_{t \to 0}\frac{T_t(f)-f}{t}, \quad f\in D(\Gamma).
$$
The growth estimate \eqref{growth estimate of H^p} shows that
convergence in the norm of $H^p(\U)$ implies in particular
pointwise convergence, therefore for $f\in D(\Gamma)$,
\begin{align*}
\Gamma(f)(z)&=\lim_{t \to
0}\frac{T_t(f)(z)-f(z)}{t}=\frac{\partial f(\phi_{t}(z))}{\partial
t}\biggm|_{t=0}\\
&=f'(z)\frac{\partial\phi_{t}(z)}{\partial
t}\biggm|_{t=0}=G(z)f'(z).
\end{align*}
This shows that $D(\Gamma)\subseteq \{f\in H^{p}(\U): Gf'\in
H^{p}(\U)\}$. Conversely let $f\in H^{p}(\U)$ such that $Gf'\in
H^{p}(\U)$. Then for $z\in\U$,
\begin{eqnarray*}
T_t(f)(z)-f(z)&=&\int_{0}^{t}\frac{\partial
f(\phi_{s}(z))}{\partial
s}\,ds\\
&=&\int_{0}^{t}\frac{\partial\phi_{s}(z)}{\partial
s}f'(\phi_{s}(z))\,ds\\
&=&\int_{0}^{t}G(\phi_{s}(z))f'(\phi_{s}(z))\,ds.
\end{eqnarray*}
Therefore
$$
\frac{T_t(f)-f}{t}=\frac{1}{t}\int_{0}^{t}T_s(Gf')\,ds.
$$
Since $\{T_t\}$ is strongly continuous the latter tends in the
norm of $H^p(\U)$ to $Gf'$ as $t\to0$ (see \cite[Theorem 2.4, p.
4]{Pa}). Thus $f\in D(\Gamma)$, completing the proof.
\end{proof}

\begin{corollary}\label{uniformly continuity}
Suppose $1\leq p<\infty$. The only uniformly continuous semigroup
$\{T_t\}$ on $H^p(\U)$ is the one induced by the trivial
semigroup.
\end{corollary}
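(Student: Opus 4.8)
The plan is to combine Theorem~\ref{Strong continuity and infinitesimal - Hp} with the basic fact of semigroup theory (see e.g.\ \cite[Theorem~1.2]{Pa}) that a strongly continuous semigroup on a Banach space is uniformly continuous if and only if its infinitesimal generator is bounded, i.e.\ if and only if the generator is everywhere defined. By Theorem~\ref{Strong continuity and infinitesimal - Hp} the generator of $\{T_t\}$ is $\Gamma f=Gf'$ with $D(\Gamma)=\{f\in H^{p}(\U):Gf'\in H^{p}(\U)\}$, where $G$ generates $\{\phi_t\}$. Since the trivial semigroup ($G\equiv0$, $\phi_t(z)\equiv z$, $T_t\equiv I$) plainly induces a uniformly continuous $\{T_t\}$, it remains to show that $D(\Gamma)=H^{p}(\U)$ forces $G\equiv0$; then $\partial_t\phi_t(z)=G(\phi_t(z))=0$, so $\phi_t(z)\equiv z$ and $\{\phi_t\}$ is trivial.

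Assume therefore $D(\Gamma)=H^{p}(\U)$. Being a closed operator (as every semigroup generator is, or directly because norm convergence in $H^{p}(\U)$ forces, via \eqref{growth estimate of H^p}, locally uniform convergence and hence convergence of derivatives) defined on all of $H^{p}(\U)$, $\Gamma$ is bounded by the closed graph theorem: there is $C>0$ with $\n{Gf'}_{p}\le C\n{f}_{p}$ for all $f\in H^{p}(\U)$. The plan is to contradict this by a concentration argument near a boundary point.

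Suppose $G\not\equiv0$. I would use the test functions $f_\epsilon(z)=(z-a+i\epsilon)^{-2/p}$ ($a\in\R$, $\epsilon>0$; choosing a branch of the power, a direct computation of the sup-norm, as in Lemma~\ref{(z+i)^l}, gives $\n{f_\epsilon}_{p}^{p}=\pi/\epsilon$, so $f_\epsilon\in H^{p}(\U)$), whose derivatives $f_\epsilon'(z)=-\tfrac2p(z-a+i\epsilon)^{-2/p-1}$ are continuous and non-vanishing on $\overline{\U}$. From this one checks that $Gf_\epsilon'\in H^{p}(\U)$ has boundary values $G(x)f_\epsilon'(x)$ a.e., so that the boundary function $x\mapsto G(x)$ exists a.e., is locally $p$-integrable, and cannot vanish a.e.\ (else $Gf_\epsilon'$ would be an $H^{p}(\U)$ function with zero boundary values, forcing $G\equiv0$). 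Pick $a\in\R$ with $G(a)\neq0$ which is moreover a Lebesgue point of $\abs{G}^{p}$. Then, using the boundary values,
$$
\n{Gf_\epsilon'}_{p}^{p}=\Bigl(\tfrac2p\Bigr)^{p}\int_{-\infty}^{\infty}\abs{G(x)}^{p}\bigl((x-a)^{2}+\epsilon^{2}\bigr)^{-1-\frac p2}\,dx,
$$
and since the weight $\bigl((x-a)^{2}+\epsilon^{2}\bigr)^{-1-\frac p2}$, normalized by its integral $K_{p}\epsilon^{-1-p}$ (with $K_{p}=\int_{\R}(u^{2}+1)^{-1-p/2}\,du<\infty$), is an approximate identity at $a$ with a radially decreasing integrable profile, the Lebesgue-point property yields $\n{Gf_\epsilon'}_{p}^{p}\sim(2/p)^{p}K_{p}\abs{G(a)}^{p}\,\epsilon^{-1-p}$ as $\epsilon\to0$. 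As $\n{f_\epsilon}_{p}^{p}=\pi\epsilon^{-1}$, the ratio $\n{Gf_\epsilon'}_{p}/\n{f_\epsilon}_{p}$ blows up, contradicting $\n{Gf'}_{p}\le C\n{f}_{p}$. Hence $G\equiv0$, which finishes the proof.

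The only genuinely delicate point is this last step: one must locate a boundary point $a$ at which $G$ has a finite nonzero value and is regular enough for the approximate-identity estimate, and this is where the argument really uses that $D(\Gamma)$ exhausts $H^{p}(\U)$ rather than merely that $G$ has controlled growth. The formulation above keeps the needed input minimal by deriving the local $p$-integrability of $G$'s boundary function for free from $D(\Gamma)=H^{p}(\U)$ itself; alternatively one can invoke that $G$, up to the polynomial factor $(z-\overline d)(z-d)$ in its Berkson--Porta representation, maps $\U$ into a half-plane, hence lies in the Nevanlinna class and has finite nonzero non-tangential limits a.e.\ on $\R$. Everything else — the semigroup dichotomy, the closed graph step, and the elementary estimates with $(z-a+i\epsilon)^{-2/p}$ already familiar from Lemma~\ref{(z+i)^l} and the growth estimate — is routine.
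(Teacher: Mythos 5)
Your argument is correct in substance, but it takes a genuinely different route from the paper. Both proofs start the same way (uniform continuity of the strongly continuous semigroup forces the generator $\Gamma f=Gf'$ to be bounded on all of $H^p(\U)$), but the paper then avoids boundary-value theory entirely: it tests $\Gamma$ on the explicit unit-norm family $e_n(z)=\pi^{-1/p}(\gamma^{-1}(z))^n(z+i)^{-2/p}$, uses $|\gamma^{-1}(x)|=1$ on $\R$ together with the factor $2n+\tfrac2p$ produced by differentiating $(\gamma^{-1})^n$ to get $\n{Ge_n'}_p\geq \frac{n}{\pi^{1/p}}\n{G\omega'}_p$ with $\omega'(z)=(z+i)^{-2/p-2}$, and concludes $\n{G\omega'}_p=0$, hence $G\equiv0$; the blow-up there comes from oscillation (degree $n$) rather than concentration. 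You instead concentrate the test functions $(z-a+i\epsilon)^{-2/p}$ at a boundary point, which forces you to develop the a.e.\ non-tangential boundary values of $G$, their local $p$-integrability, the fact that they are not a.e.\ zero, and an approximate-identity argument at a Lebesgue point; this works and even gives the quantitative blow-up rate $\n{Gf_\epsilon'}_p/\n{f_\epsilon}_p\gtrsim\epsilon^{-1}$, but it is heavier machinery than the paper needs. Two details you should make explicit if you write this up: first, fix one $a_0$ and $\epsilon=1$ to extract the boundary function of $G$ and its weighted integrability before choosing the Lebesgue point $a$ (as stated, $f_\epsilon$ already depends on $a$); second, the Lebesgue-point convergence for the kernel $((x-a)^2+\epsilon^2)^{-1-p/2}$ needs the tail bound $\int_{|x-a|>\delta}|G(x)|^p|x-a|^{-2-p}\,dx<\infty$, which is exactly what $Gf_1'\in H^p(\U)$ supplies; with that observation the approximate-identity step is standard and the contradiction with $\n{Gf'}_p\leq C\n{f}_p$ goes through.
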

\begin{proof}
Suppose a semigroup $\{\phi_{t}\}$ with generator $G$ induces a
semigroup $\{T_t\}$ which is continuous in the uniform operator
topology. Then the infinitesimal generator $\Gamma$ of $\{T_t\}$
is bounded on $H^p(\U)$. Thus for each $f\in H^p(\U)$ by Theorem
\ref{Strong continuity and infinitesimal - Hp} we have that
$\Gamma(f)=Gf'\in H^p(\U)$ and moreover
$$
\n{Gf'}_{p}\leq\n{\Gamma} \n{f}_{p}.
$$
Consider now for each $n$ natural the analytic functions
$$
e_{n}(z)=\frac{1}{\pi^{1/p}}\frac{(\gamma^{-1}(z))^n}{(z+i)^{2/p}},
\quad z\in\U,
$$
where we recall that $\gamma^{-1}(z)=\frac{z-i}{z+i}:\U\to\D$. By
Lemma \ref{(z+i)^l} we see that $e_{n}\in H^p(\U)$ and furthermore
$$
\n{e_{n}}^p_{p}=\frac{1}{\pi}\int_{-\infty}^{\infty}\frac{|\gamma^{-1}(x)|^{pn}}{|x+i|^2}\,dx
=\frac{1}{\pi}\int_{-\infty}^{\infty}\frac{1}{x^2+1}\,dx=1.
$$
Thus $\n{Ge'_{n}}_{p}\leq\n{\Gamma}<\infty$ for each $n$. A short
computation gives
$$
e'_{n}(z)=\frac{1}{\pi^{1/p}}\frac{(\gamma^{-1}(z))^{n-1}}{(z+i)^{\frac{2}{p}+2}}\big[-\frac{2}{p}z+(2n+\frac{2}{p})i\big].
$$
Also let $\omega(z)=-\frac{p}{p+2}(z+i)^{-\frac{2}{p}-1}$ for
which $\omega'(z)=(z+i)^{-\frac{2}{p}-2}$. Since from Lemma
\ref{(z+i)^l} $\omega\in H^p(\U)$ we get that $G\omega'\in
H^p(\U)$. Therefore
\begin{align*}
\n{Ge'_{n}}_p^p&=\frac{1}{\pi}\int_{-\infty}^\infty|(G\omega')(x)|^p|(\gamma^{-1}(x))^{n-1}[-\frac{2}{p}x+(2n+\frac{2}{p})i]|^p\,dx\\
&\geq\frac{n^p}{\pi}\int_{-\infty}^\infty|(G\omega')(x)|^p\,dx=
\frac{n^p}{\pi}\n{G\omega'}_p^p
\end{align*}
and it follows that for each $n$,
$\frac{n}{\pi^{1/p}}\n{G\omega'}_p\leq\n{\Gamma}<\infty$. Thus we
conclude that $G\equiv 0$, that is $\{\phi_{t}\}$ is trivial,
which obviously induces an uniformly continuous semigroup
$\{T_t\}$.
\end{proof}

\begin{proposition}\label{point spectrum of infinitesimal - Hp}
Suppose $1\leq p<\infty$ and let $\{\phi_{t}\}$ be a semigroup
which induces a semigroup $\{T_t\}$ of bounded operators on
$H^{p}(\U)$. If $G$ is the generator, $d$ the DW point and $h$ the
associated univalent function of $\{\phi_{t}\}$, then we have the
following for $\sigma_\pi(\Gamma)$, the point spectrum of the generator $\Gamma$ of $\{T_t\}$. \\
i) If $d\in\U$, then
$$
\sigma_\pi(\Gamma)=\{G'(d)k : h(z)^k\in H^p(\U), \ k=0, 1,
2,...\}.
$$
ii) If $d\in\partial\U$, then
$$
\sigma_\pi(\Gamma)=\{G(i)\nu\in\C : e^{\nu h(z)}\in H^p(\U)\}.
$$
\end{proposition}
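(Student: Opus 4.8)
The plan is to solve the eigenvalue equation $\Gamma f=\lambda f$, which by Theorem \ref{Strong continuity and infinitesimal - Hp} is the first order linear ODE $Gf'=\lambda f$, by first converting it --- with the help of the functional equations satisfied by the associated univalent function $h$ --- into an equation involving $h$ only, and then checking which solutions lie in $D(\Gamma)=\{f\in H^p(\U):Gf'\in H^p(\U)\}$. We may assume $\{\phi_t\}$ is nontrivial, since for the trivial semigroup $\Gamma=0$ and $\sigma_\pi(\Gamma)=\{0\}$.

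First I would record the infinitesimal versions of \eqref{h - DW interior} and \eqref{h - DW boundary}. If $d\in\U$, the identity \eqref{h - DW interior} is equivalent to the Schr\"oder equation $h(\phi_t(z))=e^{G'(d)t}h(z)$; differentiating in $t$ at $t=0$ and using $\partial_t\phi_t(z)|_{t=0}=G(z)$ yields $Gh'=G'(d)\,h$ on $\U$. Since $h$ is univalent, $h'$ is zero free and $h$ vanishes only at $d$, to first order; hence $d$ is the only zero of $G$, it is simple, and $G'(d)\neq0$ (otherwise $Gh'\equiv0$, i.e. $G\equiv0$, contradicting nontriviality). If $d\in\partial\U$, then \eqref{h - DW boundary} is equivalent to Abel's equation $h(\phi_t(z))=h(z)+G(i)t$, and the same differentiation gives $Gh'=G(i)$ on $\U$; thus $G$ is zero free on $\U$ and $G(i)\neq0$.

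Next, let $0\not\equiv f\in D(\Gamma)$ with $Gf'=\lambda f$. If $\lambda=0$, then $f'\equiv0$ off the isolated zeros of $G$, so $f$ is a nonzero constant, which is impossible in $H^p(\U)$; hence $0\notin\sigma_\pi(\Gamma)$, consistent with the stated sets (the values $k=0$ and $\nu=0$ are ruled out because $1\notin H^p(\U)$). Assume $\lambda\neq0$. Since any zero of $f$ must be a zero of $G$, in the interior case $f$ can vanish only at $d$, say to order $m\geq0$; then $u:=f/h^{m}$ is analytic and zero free on all of $\U$, and from $f'/f=\tfrac{\lambda}{G'(d)}h'/h$ we get $u'/u=\big(\tfrac{\lambda}{G'(d)}-m\big)h'/h$, which must be analytic at $d$. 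As $h'/h$ has a simple pole there, this forces $\tfrac{\lambda}{G'(d)}=m$; then $u'/u\equiv0$, so $f=Ch^{m}$ with $m=\lambda/G'(d)\in\{1,2,\dots\}$. In the boundary case $G$, and therefore $f$, is zero free, so $f'/f=\tfrac{\lambda}{G(i)}h'$ is analytic on the simply connected domain $\U$; integrating, $f=Ce^{\nu h}$ with $\nu=\lambda/G(i)$.

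Conversely, $G(h^{k})'=kh^{k-1}(Gh')=G'(d)k\,h^{k}$ and $G(e^{\nu h})'=\nu(Gh')e^{\nu h}=G(i)\nu\,e^{\nu h}$, so these functions are eigenfunctions of $\Gamma$ with the claimed eigenvalues; moreover in each case $Gf'$ is a scalar multiple of $f$, so the requirement $f\in D(\Gamma)$ reduces to $f\in H^p(\U)$. Combining the two directions yields exactly the sets in (i) and (ii). I expect the only delicate point to be the single-valuedness/regularity step in the interior case --- that the formal solution $Ch^{\lambda/G'(d)}$ is a genuine element of $\mathcal H(\U)$ only when $\lambda/G'(d)$ is a nonnegative integer --- which the factorization $f=h^{m}u$ handles cleanly once Step~1 is in place; everything else is a routine ODE computation.
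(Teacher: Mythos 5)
Your proof is correct and follows essentially the same route as the paper: reduce the eigenvalue problem to the ODE $Gf'=\lambda f$ via Theorem \ref{Strong continuity and infinitesimal - Hp}, express $G$ through $h$ using \eqref{h - DW interior} and \eqref{h - DW boundary} (giving $G=G'(d)h/h'$, resp. $G=G(i)/h'$), and solve. The only cosmetic difference is in the interior case, where the paper forces $\lambda/G'(d)\in\{0,1,2,\dots\}$ via the argument principle on a small circle about $d$, while you factor $f=h^{m}u$ according to the order of vanishing at $d$ and use the simple pole of $h'/h$ --- the same local argument in a slightly different guise.
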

\begin{proof}
We have to solve $\Gamma(f)=\lambda f$ for $\lambda\in\C$ and
$f\in D(\Gamma)$, $f\not\equiv0$. This by Theorem \ref{Strong
continuity and infinitesimal - Hp} is equivalent to the
differential equation
\begin{equation}\label{differential equation for point spectrum}
G(z)f'(z)=\lambda f(z), \quad f\in H^p(\U), \quad f\not\equiv0.
\end{equation}

i) Suppose $d\in\U$. Then $\phi_t(z)=h^{-1}(e^{G'(d)t} h(z))$ for
each $t$ with $h(d)=0$ and $h'(d)=1$ (see \eqref{h - DW
interior}). So we get that
$$
G(z)=\frac{\partial\phi_t(z)}{\partial t}\Big|_{t=0}
=G'(d)\frac{e^{G'(d)t}h(z)}{h'(\phi_t(z))}\Big|_{t=0}=G'(d)\frac{h(z)}{h'(z)}.
$$
Notice that $G'(d)\neq0$, since otherwise $\{\phi_{t}\}$ will be
trivial. Thus \eqref{differential equation for point spectrum}
becomes
$$
\frac{h(z)}{h'(z)}f'(z)=\frac{\lambda}{G'(d)} f(z), \quad f\in
H^p(\U), \quad f\not\equiv0.
$$
If a $\lambda\in\C$ and a function $f$ satisfy this, choosing
$r\in (0,\ \textrm{Im}d)$ such that $f(z)$ has no zeros on
$|z-d|=r$, we get
$$
\frac{1}{2\pi
i}\int_{|z-d|=r}\frac{f'(\zeta)}{f(\zeta)}\,d\zeta=\frac{\lambda}{G'(d)}\frac{1}{2\pi
i}\int_{|z-d|=r}\frac{h'(\zeta)}{h(\zeta)}\,d\zeta
$$
and by the argument principle follows that $\lambda=G'(d)k$, where
$k$ is a nonnegative integer. Since the nonzero analytic solutions
of
$$
\frac{h(z)}{h'(z)}f'(z)=k f(z)
$$
are of the form $ch(z)^{k}, \ c\neq 0,$ the conclusion follows.

ii) Suppose $d\in\partial\U$. Then $\phi_t(z)=h^{-1}(h(z)+G(i)t)$
for each $t$ (see \eqref{h - DW boundary}) and
$$
G(z)=\frac{\partial\phi_t(z)}{\partial t}\Big|_{t=0}
=\frac{G(i)}{h'(\phi_t(z))}\Big|_{t=0}=\frac{G(i)}{h'(z)},
$$
where, since $\{\phi_{t}\}$ is not trivial, $G(i)\neq0$. Thus by
\eqref{differential equation for point spectrum} we see that
$G(i)\nu\in \sigma_\pi(\Gamma)$ if and only if there exists a
function $f\in H^p(\U)$, $f\not\equiv0$, that satisfies
$$
\frac{f'(z)}{h'(z)}=\nu f(z).
$$
Since the nonzero analytic solutions of this are of the form
$ce^{\nu h(z)}, \ c\neq0,$ the conclusion follows.
\end{proof}

\section*{Acknowledgement}
This article constitutes part of my Ph.D. thesis written under the
supervision of Professor A. G. Siskakis at the Aristotle
University of Thessaloniki.  I would like to thank him for
suggesting this subject and for valuable discussions on the topic
of semigroups of composition operators.

\bibliographystyle{amsplain}

\end{document}